\newtheorem{theorem}{Theorem}
\newtheorem{proposition}{Proposition}
\newtheorem{corollary}{Corollary}
\newtheorem{remark}{Remark}
\newtheorem{lemma}{Lemma}
\newcommand{\R}{{\mathbb{R}}}
\newcommand{\eps}{\varepsilon}
\newcommand{\udiv}{\mbox{div}\,}
\title[Vlasov-Fokker-Planck-Mean-Field]{A new approach to the mean-field limit of Vlasov-Fokker-Planck equations}
\author[D.~Bresch]{D. Bresch}
\address{\sc D. Bresch. Univ. Savoie Mont Blanc, CNRS, LAMA, 73000 Chamb\'ery, France}
\email{didier.bresch@univ-smb.fr}
\author[P.--E.~Jabin]{P.--E. Jabin}
\address{\sc P.--E. Jabin.  Department of Mathematics and Huck Institutes, Pennsylvania State University, State College, PA 16801, USA}
\email{pejabin@psu.edu}
\author[J.~Soler]{J. Soler}
\address{\sc J. Soler. Departamento de Matem\'atica Aplicada and Research Unit ``Modeling Nature'' (MNat), Facultad de Ciencias, Universidad de Granada, 18071 Granada, Spain}
\email{jsoler@ugr.es}
\begin{document}

\maketitle

\begin{abstract}
This article introduces a novel approach to the mean-field limit of stochastic systems of interacting particles, leading to the first ever derivation of the mean-field limit to the Vlasov-Poisson-Fokker-Planck system for plasmas in dimension~$2$ together with a partial result in dimension~$3$. The method is broadly compatible with second order systems that lead to kinetic equations and it relies on novel estimates on the BBGKY hierarchy. By taking advantage of the diffusion in velocity, those estimates bound weighted $L^p$ norms of the marginals or observables of the system, uniformly in the number of particles. This allows to qualitatively derive the mean-field limit for very singular interaction kernels between the particles, including repulsive Poisson interactions, together with quantitative estimates for a general kernel in  $L^2$.
\end{abstract}

\section{Introduction}
The rigorous derivation of kinetic models such as the Vlasov-Poisson system from many-particle systems has been a long standing open question, ever since the introduction of the Vlasov-Poisson system in~\cite{Vlasov1,Vlasov2}. While our understanding of the mean-field limit for singular interactions has made significant progress for first-order dynamics, the mean-field limit for second-order systems has remained frustratingly less understood. This article proposed a new approach that is broadly applicable to second-order systems with repulsive interactions and diffusion in velocity. In particular this allows to derive for the first time the Vlasov-Poisson-Fokker-Planck system  in dimension higher than one without any truncation or regularizing.

We more precisely consider the classical second-order Newton dynamics
\begin{equation}
\begin{split}
&\frac{d}{dt} X_i(t)=V_i(t),\quad X_i(t=0)=X_i^0,\\
&dV_i(t)=\frac{1}{N}\,\sum_{j\neq i} K(X_i-X_j)\,dt+\sigma\,dW_i,\quad V_i(t=0)=V_i^0,
\end{split}\label{Npart}
\end{equation}
where the $W_i$ are $N$~independent Wiener processes. For simplicity we take the positions $X_i$ on the torus~$\Pi^d$, while the velocities lie in $\R^d$. The kernel $K$ models the pairwise interaction between particles and is taken {\em repulsive} throughout this paper, in the basic sense that it derives from a potential $K=-\nabla \phi$ that is even and positive, $\phi\geq 0$.
\begin{remark}
For simplicity, we denote $\phi(0)=0$ and $K(0)=0$, even if $\phi$ and $K$ are not continuous at $0$. This notation simplifies the writing in the equation by allowing to sum over all $j$ in~\eqref{Npart} since the term $j=i$ trivially vanishes.
\end{remark}

We naturally focus on singular kernels~$K$ with, as a main guiding example, the case of Coulombian interactions
\begin{equation}
K=\alpha\,\frac{x}{|x|^d}+K_0(x),\label{coulombian}
\end{equation}
with $\alpha>0$ and $K_0$ a smooth correction to periodize $K$. This corresponds to the choice 
$\phi= \alpha\, (d-2)^{-1} |x|^{2-d} +\mbox{correction}$ if $d\geq 3$ or $\phi=-\alpha \ln |x|+\mbox{correction}$ if $d=2$.

The Coulombian kernel~\eqref{coulombian} typically models electrostatic interactions between point charges, such as ions or electrons in a plasma when the velocities are small enough with respect to the speed of light. In that setting,  diffusion in~\eqref{Npart} may for example represent collisions against a random background, such as the collision of the faster electrons against the background of ions. Such random collisions may also involve some friction in velocity, which we did not include in~\eqref{Npart} but could be added to our method without difficulty.  This makes~\eqref{Npart} with~\eqref{coulombian} one of the most classical and important starting points for the modeling of plasmas; we refer in particular to the classical~\cite{Bogo1,Bogo2}.

Coulombian interactions are also a natural scaling in many models. The obvious counterpart to plasmas concerns the Newtonian dynamics of point masses through gravitational interactions. This consists in taking $\alpha<0$ in \eqref{coulombian} and leads to attractive interactions with a negative potential and for this reason cannot be handled with the method presented here.

The system~\eqref{Npart} usually involves  a very large number of particles, typically up to $10^{20}-10^{25}$ in plasmas for example. This makes the mean-field limit especially attractive. This is a kinetic, Vlasov-Fokker-Planck equation posed on the limiting 1-particle density $f(t,x,v)$
\begin{equation} 
\partial_t f+v\cdot \nabla_x f+ (K\star_x \rho)\cdot\nabla_v f=\frac{\sigma^2}{2}\,\Delta_v f
\quad \hbox{ with } \quad \rho =  \int_{\R^d} f dv.
\label{vlasov}
\end{equation}
Well posedness for mean-field kinetic equations such as~\eqref{vlasov} is now reasonably well understood, including for singular Coulombian interactions such as~\eqref{coulombian} in dimension~$d\leq 3$. For the non-diffusive case $\sigma=0$, weak solutions were established in~\cite{Arsenev}, while classical solutions were obtained in dimension~$2$ in~\cite{UkaOka}. The dimension~$3$ case is harder and obtaining classical solutions requires more difficult dispersive arguments and were only obtained later in~\cite{LioPer,Pfa,Schaeffer}, see also the more recent~\cite{GasJabPer,HolMio,Loeper, Pal}. In the case with diffusion~$\sigma>0$, we refer to~\cite{Victory} for weak solutions, and to~\cite{Bouchut,De,OnoStr,ReiWec,VicOdw} for classical solutions.

Of course the mean-field scaling is not the only possible scaling on systems such as~\eqref{Npart}. One can in particular mention the likely even more critical Boltzmann-Grad limit, such as obtained in the classical~\cite{Lanford} and the recent major results in \cite{BodGalSaiSim1,BodGalSaiSim2,GalSaiTex,PulSafSim,PulSim}. We note as well that the derivation of macroscopic equations from mesoscopic systems such as~\eqref{vlasov} is another important and challenging questions. For example the passage to the fluid macroscopic system from Vlasov-Poisson-Fokker-Planck has been approached in different low-field (parabolic) or high-field (hyperbolic) regimes depending on the space dimension, see for example \cite{PS,NPS,GNPS,CChP} and references therein.

Mean-field limits have been rigorously derived for general systems, including second order dynamics such as~\eqref{Npart}, in the case of Lipschitz interaction kernels~$K$. We refer to the classical works~\cite{McKean,Sznitman} in the stochastic case and~\cite{BraHep,Dob} for the deterministic case. Uniform in time propagation of chaos has also been obtained in the locally Lipschitz case, notably in a close to convex case in~\cite{BoGuMa} and more recently in a non-convex setting in~\cite{GuLeMo}.

There now exists a large literature on the question of the mean-field limits, see for example the survey in~\cite{Gol,Jareview,JW2}.
However in the specific case of second order systems such as~\eqref{Npart}, very little is known. In dimension~$d=1$, the Vlasov-Poisson-Fokker-Planck system was derived in~\cite{GuLe,HaSa}. In dimension~$d\geq 2$, the only results for unbounded interaction kernels were obtained in~\cite{HauJab1,HauJab2}. But those are valid only in the deterministic case~$\sigma=0$, and for only mildly singulars kernels with $|K(x)|\lesssim |x|^{-\alpha}$, $|\nabla K|\lesssim |x|^{-\alpha-1}$ for $\alpha<1$. \cite{JabWan1} derived the mean-field limit with~$K\in L^\infty$ and without extra derivative. Those cannot cover Coulombian interactions, even in dimension~$2$. 

More is known for singular interaction kernels~$K$ that are smoothed or truncated at some N-dependent scale~$\eps_N$. In that truncated case, one can mention in particular~\cite{GanVic,GanVic2,VA,Wollman} for the convergence of so-called particle methods. The recent works~\cite{BoePic,Laz,LazPic} in the deterministic case and~\cite{HuLiPi} in the stochastic case considerably extended the results for such truncated kernels and allowed to almost reach the critical physical scale~$\eps_N\sim N^{-1/d}$. One can also mention~\cite{CaChSa} with polynomial cut-off. It is also possible to derive the Vlasov-Poisson system directly from many-particle quantum dynamics such as the Hartree equation, for which we briefly refer to~\cite{GolPau,La,Saf}.

The mean-field limits for first order systems with singular interactions appear to be more tractable. A classical example concerns the dynamics of point vortices or stochastic point vortices where the mean-field limit corresponds to the vorticity formulation of 2d incompressible Euler or Navier-Stokes. The interaction between vortices obey the Biot-Savart law which has the same singularity as the Coulombian kernel in dimension~$2$. In the deterministic case the mean-field limit was classically obtained for example in~\cite{Goodman91,GooHouLow90} or~\cite{Scho95,Scho96} for the 2d Euler and extended to remarkably essentially any Riesz kernels in~\cite{Se}. In the stochastic case, we refer in particular to~\cite{FHM-JEMS,Osada,JabWan2} for the limit to 2d Navier-Stokes, to~\cite{BrJaWa, BreJabWan} for singular attractive kernels, or to \cite{NgRoSe} for multiplicative noise. Uniform in time propagation of chaos was even recently obtained in~\cite{GuiLebMon,RoSe}. 

One of the reason second order systems appear more difficult to handle stems from how the structure of the singularity interacts with the distribution of velocities. Because of the term~$K(X_i-X_j)$, the singularity in pairwise interactions is typically localized on collisions~$X_i=X_j$. For first-order systems this corresponds to a point singularity, while for second-order systems the presence of the additional velocity variables makes it into a plane. In that regard, we also note that the derivation of macroscopic system directly from 2nd order dynamics is in fact better understood than the derivation of kinetic equations like~\eqref{vlasov}. We refer to the derivation of incompressible Euler in~\cite{HanIac}, or to the derivation of monokinetic solutions to~\eqref{vlasov} (which are essentially equivalent to a macroscopic system) in~\cite{Se}.

The main argument in our proof is a new quantitative estimate on the so-called marginals of the system through the BBGKY hierarchy. This leads to the propagation of some weighted $L^p$ estimates on the marginals. It implies a weak propagation of chaos in the sense of \cite{Sznitman} but it applies more broadly to initial data that are not chaotic or not close to being chaotic.

Recently new approaches have been introduced to bound marginals on systems with appropriate non-degenerate diffusion. Using relative entropy, \cite{Lacker} was the first to derive quantitative estimates comparing the marginals to the limiting tensorized solution, thus deriving optimal rates for the propagation of chaos in $O(1/N)$, instead of $O(1/\sqrt{N})$ on the convergence of the marginals (as observed for smoother interactions in \cite{Duer}). While formulated for 1st order method, the method also applies to 2nd order system with diffusion in velocity, as observed by the author. The method takes advantage of the regularizing provided by the diffusion to avoid ``losing'' a derivative in the hierarchy estimates. The use of the relative entropy however imposes that the interaction kernel belongs to an exponential Orlicz space. In a different context of non-exchangeable systems, \cite{JaPoSo} later used the propagation of $L^2$ norms on some equivalent of the marginals, again taking advantage of the diffusion but requiring the interaction kernel~$K$ in~$L^\infty$.

The present article focuses mostly on second-order singular systems, where our method combines this general idea with a specific choice of weights for the $L^p$ norms that are propagated. Those weights are based on a total energy reduced to $k$ particles when dealing with the marginal of order~$k$. They allow to take advantage of a further regularizing effect in the hierarchy to only require kernels~$K$ in some $L^p$ with $p>1$. The same idea to propagate $L^p$ norms on the marginals also applies to 1st order systems in confined domains, without then requiring weights.

A direct consequence of our approach is the first ever derivation of the mean-field limit for the repulsive Vlasov-Poisson-Fokker-Planck over a finite time interval. This applies to any chaotic initial data in dimension~$d=2$ and for initial data with more restrictive energy bound in any dimension~$d\geq 3$. We are expecting to extend this derivation in a future work to any chaotic initial data in any dimension~$d\geq 2$ by decomposing appropriately the initial data.

The paper is structured as follows: We start in Section 2 with the notations and main results. We first state our main result, Theorem~\ref{Conv},  that proves the convergence to the Vlasov-Fokker-Planck equation as $N$ tend to infinity  followed with a Theorem~\ref{quantitative} proving quantitative estimates for singular kernels in $L^2$. We next introduce Proposition \ref{propLp} which states the explicit propagation of weighted $L^p$ bounds on the marginals. We in particular discuss more thoroughly the limitations and possible extensions of our approach after stating Proposition~\ref{propLp}. Section 3 is devoted to the proof of Proposition~\ref{propLp} and Theorem~\ref{Conv} from the key technical contribution of the article around Lemma~\ref{technicalLemma}  and ends with the proof of Theorem~\ref{quantitative}.
%
\section{Main results}
\subsection{The new result}
We introduce 
the full $N$-particle joint law of the system $f_N$ which satisfies the Liouville or forward Kolmogorov equation
\begin{equation}
\begin{split}
& \partial_t f_N+\sum_{i=1}^N v_i\cdot\nabla_{x_i} f_N\\
&  \hskip1cm +\sum_{i=1}^N  \frac{1}{N}\,\sum_{j=1}^N K(x_i-x_j) \cdot\nabla_{v_i} f_N=\frac{\sigma^2}{2}\,\sum_i \Delta_{v_i} f_N, \label{Liouville}
\end{split}
\end{equation}
which is a linear advection-diffusion equation. However the marginals $f_{k,N}$ of $f_N$ will also play a critical role in the analysis. They correspond to the law of $k$ among $N$ particles and are represented through
\begin{equation}
\begin{split}
&f_{k,N}(t,x_1,v_1,\ldots,x_k,v_k) = \\
& \int_{\Pi^{d(N-k)}\times\R^{d(N-k)}} f_N(t,x_1,v_1\ldots,x_N,v_N)\,dx_{k+1}\,dv_{k+1}\dots dx_N\,dv_N.
\label{marginaldef}
\end{split}
\end{equation}
The question of well-posedness for Eq.~\eqref{Liouville} can be delicate and is separate from the issue of the mean-field limit that we consider here. For this reason, we consider a notion of entropy solution $f_N\in L^\infty(\R_+\times\Pi^{dN}\times\R^{dN})$ to~Eq.~\eqref{Liouville}, that is fully described later in subsection~\ref{Liouvillewellposed}, and to which we impose  some Gaussian decay in velocity
\begin{equation}
  \begin{split}
\sup_{t\leq 1}\int_{\Pi^{dN}\times \R^{dN}} e^{\beta\,\sum_{i\leq N} |v_i|^2}\,&f_N\,dx_1\,dv_1\dots dx_N\,dv_N\leq V^N,\\& \mbox{for some}\ \beta>0,\ V>0,
\end{split}
\label{gaussiandecay}
\end{equation}
for which we refer to the short discussion in subsection~\ref{Liouvillewellposed}.

Our main is the derivation of the mean-field limit for a broad class of singular kernels.
  \begin{theorem} \label{Conv} Assume that  that there exists some constant $\theta>0$ s.t. the potential $\phi$ satisfies
    \begin{equation}
    \int_{\Pi} e^{\theta\,\phi(x)}\,dx < +\infty,\label{expphi}
\end{equation}
and that
\[
K = - \nabla \phi \in L^p ({\Pi}^d) \qquad \hbox{for some}\qquad  p>1.
\]
Let $f$ be the unique smooth solution to the Vlasov equation~\eqref{vlasov} with initial data $f^0\in C^\infty(\Pi^d\times \R^d)$ such that $\int_{\Pi^d\times \R^d} f^0\,e^{\beta\,|v|^2}<\infty$. Consider moreover an entropy solution~$f_N$ to~\eqref{Liouville} in the sense of subsection~\ref{Liouvillewellposed} and satisfying~\eqref{gaussiandecay} with initial data $f_N^0\in L^\infty(\Pi^{dN}\times\R^{dN})$. Assume that  $f_{k,N}^0$ converges weakly in $L^1$ to $(f^0)^{\otimes k}$for each fixed $k$  and that  
\[
\|f_{k,N}^0\|_{L^\infty(\Pi^{dN}\times\R^{dN})}\leq M^k ,
\]
for some $M>0$ and for all $k\leq N$.
 Then there exists $T^*$ depending only on $M$, $V$, and $\|K\|_{L^p}$ such that $f_{k,N}$, given by \eqref{marginaldef},  weakly converge to $f_k= f^{\otimes k}$ in $L^q_{loc} ([0,\;T^\star] \times \Pi^{kd} \times \R^{kd})$ for any $k$, and any $2<q<\infty$, with $1/q+1/p\le 1$. 
 \end{theorem}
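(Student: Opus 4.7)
The strategy is to deduce the convergence from the uniform weighted $L^p$ bounds on the marginals given by Proposition~\ref{propLp}, combined with a compactness and uniqueness argument for the BBGKY hierarchy. First, I would check that the initial data satisfy the hypotheses of Proposition~\ref{propLp}: the bound $\|f_{k,N}^0\|_\infty\leq M^k$, the Gaussian decay \eqref{gaussiandecay}, and the integrability~\eqref{expphi} of $e^{\theta\phi}$ together ensure that the relevant weighted $L^p$ norms of $f_{k,N}^0$ are bounded uniformly in $N$ and $k$ (up to the exponential in $k$ absorbed by the weights). Proposition~\ref{propLp} then propagates these bounds on a time interval $[0,T^*]$ whose length depends only on $M$, $V$ and $\|K\|_{L^p}$. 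Combining the weighted estimate with the velocity moments coming from \eqref{gaussiandecay}, one gets that $(f_{k,N})_N$ is bounded uniformly in $N$ in $L^q_{loc}([0,T^*]\times\Pi^{kd}\times\R^{kd})$ for every $q$ in the announced range $2<q<\infty$, $1/q+1/p\leq 1$.

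By a diagonal extraction, one then obtains a subsequence along which $f_{k,N}\rightharpoonup g_k$ weakly in $L^q_{loc}$ for each $k$. Each $f_{k,N}$ satisfies the BBGKY equation
\begin{align*}
\partial_t f_{k,N}&+\sum_{i\leq k} v_i\cdot\nabla_{x_i}f_{k,N}+\frac{1}{N}\sum_{i,j\leq k}K(x_i-x_j)\cdot\nabla_{v_i}f_{k,N}\\
&+\frac{N-k}{N}\sum_{i\leq k}\int K(x_i-x_\star)\cdot\nabla_{v_i}f_{k+1,N}\,dx_\star\,dv_\star=\frac{\sigma^2}{2}\sum_{i\leq k}\Delta_{v_i}f_{k,N}.
\end{align*}
The in-cluster term with prefactor $1/N$ involves a finite sum of $k^2$ contributions, each bounded against a smooth test function by $\|K\|_{L^p}\|f_{k,N}\|_{L^q}$; it therefore vanishes in the limit. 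The delicate step is passage to the limit in the cross term involving $f_{k+1,N}$: after integrating by parts in $v_i$ against a compactly supported smooth test function, this term becomes a linear pairing of $f_{k+1,N}$ against $\nabla_{v_i}\varphi(\cdot)\,K(x_i-x_\star)$. Using $K\in L^p$, the $L^q$ bound on $f_{k+1,N}$ with $1/q+1/p\leq 1$, and the Gaussian decay in $v_\star$ to handle the unbounded velocity integration, this pairing defines an element of $L^{q'}_{loc}$ and is therefore continuous with respect to weak $L^q_{loc}$ convergence of $f_{k+1,N}$. One concludes that the limits $(g_k)_k$ satisfy the infinite Vlasov-Fokker-Planck hierarchy on $[0,T^*]$.

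Finally, the weak convergence of the initial marginals identifies $g_k|_{t=0}=(f^0)^{\otimes k}$. Since $f$ is the smooth solution of \eqref{vlasov}, the tensorized family $(f^{\otimes k})_k$ is itself a solution of the infinite hierarchy with the same initial condition and sits inside the same class of solutions bounded in the relevant weighted $L^p$ spaces. A standard uniqueness result for the hierarchy in this class (exploiting the fact that $f^{\otimes k}$ is driven by the closed nonlinear equation \eqref{vlasov}) then forces $g_k=f^{\otimes k}$ for every $k$. Because every subsequential limit equals the same tensor product, the full sequence $f_{k,N}$ converges weakly to $f^{\otimes k}$ in $L^q_{loc}$ on $[0,T^*]$. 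The main obstacle in this program is the passage to the limit in the singular interaction term, which is precisely where the weighted $L^p$ estimates supplied by Proposition~\ref{propLp} together with the restriction $1/q+1/p\leq 1$ play their essential role.
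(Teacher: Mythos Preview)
Your overall architecture matches the paper's: verify the hypotheses of Proposition~\ref{propLp}, extract weak limits, pass to the limit in the BBGKY hierarchy, and conclude by uniqueness. However, there are two places where you skip over genuinely nontrivial work.

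First, Proposition~\ref{propLp} has \emph{two} input bounds in~\eqref{assumptlq}: one on the initial marginals $f_{k,N}^0$ and one on the full joint law $f_N(t)$ \emph{uniformly in $t\leq 1$}. You only address the initial data. The time-uniform bound $\sup_{t\leq 1}\int |f_N|^q e^{\lambda(t)e_N}\leq F^N$ is not an assumption of Theorem~\ref{Conv} and must be derived. The paper does this by first bounding $\int |f_N^0|^q e^{\lambda(0)e_N}$ via~\eqref{expphi}, \eqref{gaussiandecay}, and the $L^\infty$ bound (this is where~\eqref{expphi} is actually used), and then applying Lemma~\ref{technicalLemma} in the special case $k=N$, $\gamma=1$. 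In that case the coefficient $(N-\gamma k)/N$ vanishes, so the lemma gives a closed inequality $\frac{d}{dt}\int |f_N|^q e^{\lambda(t)e_N}\leq 0$ and the bound propagates. Without this step Proposition~\ref{propLp} cannot be invoked.

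Second, the uniqueness for the limiting hierarchy is not ``standard'' in any pre-existing sense; it is itself one of the contributions of the paper and is proved by a second application of Lemma~\ref{technicalLemma}, now with $\gamma=0$, to the differences $F_k=\bar f_k-f^{\otimes k}$. One obtains $Y_k(t)\leq k\tilde L\int_0^t Y_{k+1}(s)\,ds$ with $Y_k(0)=0$, and then iterates via Lemma~\ref{indu} up to an arbitrary level $m$ using the a priori bound $Y_m\leq \tilde M^m$; sending $m\to\infty$ for $t$ small forces $Y_k\equiv 0$. This is exactly the same hierarchy estimate machinery as in Proposition~\ref{propLp}, not an external black box, and your write-up should make that explicit.
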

 
 Our estimates can also provide quantitative rates of convergence though we need to a stronger assumption, namely $K\in L^2$.
 \begin{theorem}  \label{quantitative}  
Assume the same conditions and hypotheses of {Theorem \ref{Conv}}, with moreover $p=2$. We also assume that there exists a constant $C$ independent of $N$ and $\varepsilon_N \to 0$
such that
$$\int_{\Pi^{kd}\times \R^{kd}}  |f_{k,N}^0 - (f^0)^{\otimes k}|^2 e^{\lambda(0) e_k}
 \le C^k \varepsilon_N,
$$
for all $k$ with 
\begin{equation}\label{ek}
e_k(x_1,v_1,\ldots,x_k,v_k)=\sum_{i\leq k} (1+|v_i|^2)+\frac{1}{N}\,\sum_{i,j\leq k} \phi(x_i-x_j)
\end{equation}
and
$$\lambda(t) =1/(\Lambda (1+t)) \hbox{ for a positive constant } \Lambda.$$
Then, there exists $T^*$ such that $f_{k,N}$ converges strongly to $f_k$ in $L^2_{loc} ([0,\;T^\star] \times \Pi^{kd} \times \R^{kd})$, for any $k$,  and we have the following quantitative estimate
$$
 \sup_{t\le T^\star} \int_{\Pi^{kd} \times \R^{kd}} |f_{N,k} - f^{\otimes k}|^2 e^{\lambda(t) e_k} 
\le {\widetilde C}^k \varepsilon_N ,
$$
for some $\widetilde C$ independent of $N$. 
\end{theorem}

In addition to the mean-field limit, Theorem \ref{Conv} implies the weak propagation of chaos in the sense of the famous~\cite{Sznitman}, although with strong conditions on $f_N^0$.
Theorem \ref{Conv} also justifies for the first time the convergence to the Vlasov-Poisson-Fokker-Planck in two space dimension. More precisely, we highlight the following  result

\begin{corollary} \label{ic} Let  $d=2$ and consider the Poisson kernel $K=-\nabla \phi$
with its associated potential $\phi(x)\simeq - \ln |x|$.  Then, the convergence properties given by Theorem \ref{Conv} 
hold true, leading to the Vlasov-Poisson-Fokker-Planck system.
\end{corollary}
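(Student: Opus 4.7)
The plan is to reduce the corollary to a direct application of Theorem~\ref{Conv}, by verifying that the two-dimensional Coulomb potential and its associated kernel satisfy the structural integrability hypotheses \eqref{expphi} and $K \in L^p(\Pi^d)$ for some $p>1$. The assumptions on the initial data in Theorem~\ref{Conv} are inherited here and do not need to be rechecked; only the properties of $\phi$ and $K$ are at stake.

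The first step is to check the exponential integrability \eqref{expphi}. In dimension $d=2$, the periodized Coulomb potential can be written as $\phi(x)=-\alpha\ln|x|+\psi(x)$ for some $\alpha>0$ and some bounded smooth correction $\psi$ on $\Pi^2$ that merely periodizes the logarithmic singularity. Consequently $e^{\theta\,\phi(x)}\leq C\,|x|^{-\alpha\,\theta}$ near the origin, and this is integrable on the two-dimensional torus as long as $\alpha\,\theta<2$. Choosing any $\theta\in(0,2/\alpha)$ then yields~\eqref{expphi}.

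The second step is to verify that $K=-\nabla\phi\in L^p(\Pi^2)$ for some $p>1$. A direct differentiation gives $|K(x)|\leq C\,|x|^{-1}$ near the origin (up to the smooth periodizing correction~$K_0$), so $|K(x)|^p\leq C\,|x|^{-p}$ is integrable on $\Pi^2$ precisely when $p<2$. Any choice $p\in(1,2)$ therefore satisfies $K\in L^p(\Pi^2)$ with $p>1$ as required.

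With both structural hypotheses verified, Theorem~\ref{Conv} applies and delivers the claimed weak convergence of the marginals $f_{k,N}$ to $f^{\otimes k}$ on $[0,T^\star]$ in $L^q_{loc}$ for the range of exponents stated in Theorem~\ref{Conv}, thus yielding the first rigorous mean-field derivation of the two-dimensional repulsive Vlasov--Poisson--Fokker--Planck system from the stochastic particle system~\eqref{Npart}. Because the corollary is a pure verification of hypotheses on the two-dimensional Coulomb potential, there is no genuine obstacle in its proof: the full technical substance lies in Theorem~\ref{Conv}, and the peculiarity of dimension~$d=2$ enters only through the fact that both $|x|^{-\alpha\theta}$ (for small $\theta$) and $|x|^{-p}$ (for $p<2$) remain integrable over $\Pi^2$, a borderline property that fails in dimension three and is precisely what forces the more restrictive initial energy bound alluded to for $d\geq 3$.
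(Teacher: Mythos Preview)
Your proposal is correct and matches the paper's treatment: the paper does not give a separate proof of the corollary, treating it as an immediate consequence of Theorem~\ref{Conv} once the two structural hypotheses on the two-dimensional Coulomb potential are checked, exactly as you do. Your verification that $e^{\theta\phi}\sim |x|^{-\alpha\theta}$ is integrable on $\Pi^2$ for small $\theta$ and that $K\sim |x|^{-1}\in L^p(\Pi^2)$ for $p\in(1,2)$ is precisely the intended argument.
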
 
%
\subsection{New stability estimates}\label{stab}
Theorem~\ref{Conv} relies on a new approach to estimate on the BBGKY hierarchy solved by the marginals~$f_{k,N}$ which is of significant interest in itself. In general deriving bounds on either the BBGKY or limiting Vlasov hierarchy is complex. We refer for example to~\cite{GolMouRic} for the Vlasov hierarchy, to~\cite{DuerSain} for the study of long-time corrections to mean-field limits. Bounds on the hierarchy are critical for the derivation of collisional models such as the Boltzmann equation, ever since~\cite{Lanford}. Even a partial discussion of the challenges in the collisional setting would go well beyond the scope of this paper and we simply refer again to~\cite{BodGalSai,BodGalSaiSim1,BodGalSaiSim2,GalSaiTex,Kac1956,Lanford,PulSafSim,PulSim}.

The main difficulty in handling the hierarchy consists in the term 
\begin{equation}
\nabla_{v_i} \int_{\Pi^d\times\R^d} K(x_i-x_{k+1})\,f_{k+1,N}\,dx_{k+1}\,dv_{k+1},\label{extraterm}
\end{equation}
as seen in \eqref{hierarchy}, because this introduces the next order marginal~$f_{k+1,N}$ into the equation for $f_{k,N}$. When treated naively as a source term, it leads to a loss of one derivative on each equation of the hierarchy.

However it was recently noticed in first~\cite{Lacker} and then \cite{JaPoSo} that one may avoid this loss of derivative in the stochastic case for non-degenerate diffusion: Any $L^2$ estimate then gains an additional~$H^1$ dissipation which can be used to control the loss of one derivative. This idea still appears applicable in the present kinetic context: Even though we only have diffusion in velocity, the derivative in~\eqref{extraterm} is also only on the velocity variable.

Both~\cite{JaPoSo,Lacker} require high integrability on the kernel: $K\in L^\infty$ for~\cite{JaPoSo} and some sort of exponential Orlicz space of the type $\int e^{\lambda\,|K(x)|}\,dx<C$ for~\cite{Lacker}. Paper \cite{Lacker} used quantitative relative entropy estimates to prove uniqueness on the BBGKY hierarchy, while \cite{JaPoSo} proved uniqueness on a tree-indexed limiting hierarchy through~$L^2$ bounds. Hence in both case, the corresponding bounds on the marginals was already known uniformly in~$N$ and the challenge was to prove that the norm of the difference with the limit is small.

This leads to a first key difference with respect to the present approach and to the first critical new idea introduced in this paper. In essence, we note that the integral in~\eqref{extraterm} leads to a regularizing effect that has the same scaling as the convolution at the limit: One has by H\"older estimates that
\begin{equation}
  \begin{split}
    &\left\|\int_{\Pi^d} K(x_i-x_{k+1})\,f(x_1,\ldots,x_{k+1})\,dx_{k+1}\right\|_{L^q(\Pi^{dk})}
    \\
    &\qquad \qquad\qquad\qquad\qquad\qquad\qquad\leq \|K\|_{L^p(\Pi^d)}\,\|f\|_{L^q(\Pi^{d\,(k+1)})},
\end{split}
    \label{regularization}
  \end{equation}
provided that $1/p+1/q\leq 1$.

Taking advantage of~\eqref{regularization} for singular~$K\in L^p$ with $p$ small naturally leads to try to propagate~$L^q$ norms of the marginals~$f_{k,N}$ for large exponents~$q$; at the opposite of~\cite{JaPoSo,Lacker}. But it also leads to an additional major difficulty, due to the velocity variable in the unbounded space~$\R^d$ in~\eqref{extraterm}. In fact, trying to use~\eqref{regularization} in~\eqref{extraterm} as is would force the use of a mixed norm $L^q_x L^1_v$ on the marginals. Unfortunately such mixed norms are notoriously ill-behaved on kinetic equations.

Instead a more natural idea, from the point of view of kinetic equations, consists in using some moments or fast decay in velocity. Even if they are less usual for kinetic equations, the use of Gaussian moments is especially attractive in the current case because they are naturally tensorized. For example, one has the extension of~\eqref{regularization}
\begin{equation}
  \begin{split}
    &\int_{\Pi^{dk}\times\R^{dk}}e^{|v_1|^2+\ldots+|v_k|^2}\,\left|\int_{\Pi^d\times\R^d} K(x_i-x_{k+1})\,f_{k+1,N}\,dx_{k+1}\,dv_{k+1}\right|^q
    \\
    &\qquad\qquad \leq C_d\, \|K\|_{L^p(\Pi^d)}^q\,\int_{\Pi^{d(k+1)}\times\R^{d(k+1)}}e^{|v_1|^2+\ldots+|v_{k+1}|^2}\, | f_{k+1,N} |^q
\end{split}
    \label{regularization2}
\end{equation}
still provided $1/p+1/q\leq 1$.

However pure Gaussian moments in velocity does not seem to be naturally propagated at the discrete level of the hierarchy, even though they would trivially be propagated on the limiting mean-field equation at least for short time. This leads to the final critical idea of the paper, which is to incorporate the potential energy in the Gaussian: Namely to consider~$e^{\lambda(t)\,e_k}$ instead of a pure Gaussian with  $e_k$ defined by \eqref{ek}.

We observe that our definition of $e_k$ uses $1+|v_i|^2$ but could just as well be reduced to $|v_i|^2$ instead as \eqref{regularization2} suggests. The extra constant in $e_k$ allows to normalize the weight of each marginal by a factor $e^{\lambda(0) k}$ which saves some extra numerical constants in the proof.

We also remark that the use dynamical weights argument has been recently developed  in \cite{BreJabWan}
for first order particle systems with singular kernels. We also note that Proposition~\ref{propLp}, stated below, shows the propagation of weighted $L^q$ bounds on the marginals, without requiring the initial data to be chaotic or close to chaotic as introduced in \cite{Kac1956}. It hence applies to a broader framework than just the mean-field limit.
\begin{proposition}\label{propLp}
  Let us assume $K\in L^p(\Pi^d)$, for some $p>1$, and define
  \[
\lambda(t)=\frac{1}{\Lambda\,(1+t)}, \qquad
L  = \frac{C}{\lambda(1)^\theta} \|K\|_{L^p}^q ,
\]
for positive constants $\Lambda$, $C$, $\theta$ depending only on $q$, $d$ and $\sigma$ and $q$ and $1/q+1/p \le 1$.
Consider  a renormalized solution~$f_N$ to~\eqref{Liouville} satisfying~\eqref{gaussiandecay} with initial data $f_N^0\in L^\infty(\Pi^{dN}\times\R^{dN})$, satisfying 
\begin{equation}
\begin{split}
& \int_{\Pi^{kd}\times\R^{kd}} |f^0_{k,N}|^q\,e^{\lambda(0)\,e_k}\leq F_0^k, \\
&  \sup_{t\leq 1} \int_{\Pi^{Nd}\times\R^{Nd}} |f_{N}|^q\,e^{\lambda(t)\,e_N}\leq F^N,
\end{split} \label{assumptlq}
\end{equation}
for some $F>0$, $F_0>0$ and $q$ such that  $2\leq q<\infty$, with $1/q+1/p\leq 1$. Then, one has that
\begin{equation}\label{Estim}
\sup_{t\leq T}\int_{\Pi^{kd}\times\R^{kd}} |f_{k,N}|^q\,e^{\lambda(t)\,e_k}\leq 2^{k}\,F_0^k+F^k\,2^{2k-N-1},
\end{equation}
where $T$ is given by
\[
T = \min\left(1, \frac{1}{4\,L\,\max(F_0,F)}\right).
\]
  \end{proposition}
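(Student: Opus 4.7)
The plan is to derive a closed differential inequality for the quantity
$$y_k(t):=\int_{\Pi^{kd}\times\R^{kd}}|f_{k,N}|^q\,e^{\lambda(t)\,e_k},$$
then iterate it through the BBGKY hierarchy from level $k$ up to level $N$, where the a priori bound $\sup_{t\le 1}y_N\le F^N$ supplies terminal control.

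First I would start from the BBGKY equation for $f_{k,N}$ obtained by integrating \eqref{Liouville} over the last $N-k$ variables; by exchangeability the only non-closed contribution is the external term $\frac{N-k}{N}\sum_{i\leq k}\nabla_{v_i}\cdot\Phi_i$ with $\Phi_i(X,V):=\int K(x_i-x_*)f_{k+1,N}(\cdot,x_*,v_*)\,dx_*\,dv_*$. Multiplying by $q|f_{k,N}|^{q-1}\,\mathrm{sgn}(f_{k,N})\,e^{\lambda e_k}$ and integrating decomposes $\dot y_k$ into contributions from $\dot\lambda$, transport, internal interaction, diffusion, and the external term. A first key observation is that transport and internal interaction cancel exactly after integration by parts: $e_k$ is, up to the $1/N$ scaling, the restricted $k$-particle Hamiltonian, so the usual symplectic cancellation applies. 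Diffusion produces the negative dissipation $-\tfrac{2\sigma^2(q-1)}{q^2}\int|\nabla_V|f_{k,N}|^{q/2}|^2\,e^{\lambda e_k}$ together with weight-derived terms of order $\sigma^2\lambda\,k$ and $\sigma^2\lambda^2|v_i|^2$; these last, combined with the $\dot\lambda\,e_k$ term (negative since $\dot\lambda=-\Lambda\lambda^2$ and $e_k\ge\sum_{i\le k}(1+|v_i|^2)$ using $\phi\ge 0$), yield the absorption $-\Lambda\lambda^2\int|f_{k,N}|^q\sum_i|v_i|^2\,e^{\lambda e_k}$, which dominates every $\lambda^2|v|^2$ residue provided $\Lambda$ is chosen large enough in terms of $\sigma$ and $q$.

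The core of the work is then the external term, and this is where I expect the main obstacle to lie. Integrating by parts in $v_i$ produces a piece proportional to $(q-1)\int|f_{k,N}|^{q-2}\nabla_{v_i}f_{k,N}\cdot\Phi_i\,e^{\lambda e_k}$, which I would absorb into the dissipation by Young's inequality, and a piece proportional to $\lambda\int v_i\cdot\Phi_i\,|f_{k,N}|^{q-1}\mathrm{sgn}(f_{k,N})\,e^{\lambda e_k}$, which I would handle by Young against the $\dot\lambda$-absorption of $|v_i|^2$. Both residuals reduce to integrals of the form $\int|f_{k,N}|^{q-2}|\Phi_i|^2\,e^{\lambda e_k}$, which H\"older bounds by $y_k^{(q-2)/q}\bigl(\int|\Phi_i|^q\,e^{\lambda e_k}\bigr)^{2/q}$. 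Invoking Lemma \ref{technicalLemma} --- a weighted version of the regularization \eqref{regularization2} that uses the exponential integrability \eqref{expphi} to transfer the potential part of the weight from $e_k$ up to $e_{k+1}$ --- then yields
$$\int|\Phi_i|^q\,e^{\lambda(t)\,e_k}\le \frac{C\,\|K\|_{L^p}^q}{\lambda(t)^\theta}\,y_{k+1}(t),$$
and a final Young inequality $y_k^{(q-2)/q}y_{k+1}^{2/q}\le\tfrac{q-2}{q}y_k+\tfrac{2}{q}y_{k+1}$ produces the closed differential inequality $\dot y_k\le C\,L\,k\,(y_k+y_{k+1})$ on $[0,1]$, the factor $k$ coming from the sum over $i\le k$ in the BBGKY term.

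It only remains to close the hierarchy by iterated Duhamel. Writing $y_k(t)\le e^{CLkt}y_k(0)+CLk\int_0^t e^{CLk(t-s)}y_{k+1}(s)\,ds$ and unfolding $N-k$ times yields a combinatorial sum of nested time integrals, each terminating either in $y_{k+j}(0)\le F_0^{k+j}$ or in $\sup_t y_N\le F^N$. For $T\le\min(1,1/(4L\max(F_0,F)))$ every exponential factor is bounded by $2$, the geometric series converges, and the Pochhammer products $\prod_{j=0}^{J-1}(k+j)$ collapse into binomial coefficients; the identity $\sum_{j\ge 0}\binom{k+j-1}{j}2^{-j}=2^k$ bounds the initial-data contribution by $2^k F_0^k$, while the terminal contribution from $y_N$ reduces to $F^k\,2^{2k-N-1}$, which is exponentially small in $N$ for fixed $k$. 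The delicate point of this last step is that the $k$-factor in the differential inequality threatens to pollute the bound at fixed $k$; the availability of the uniform $L^\infty_t$ control on $y_N$ is precisely what allows the combinatorial bookkeeping to converge to the claimed form.
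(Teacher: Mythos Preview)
Your overall architecture matches the paper's: derive a differential inequality for $y_k=\int|f_{k,N}|^q e^{\lambda(t)e_k}$ using the Hamiltonian cancellation $L_k e_k=0$, absorb the diffusion cross terms via the negative $\dot\lambda\,e_k$ contribution, and iterate through the hierarchy up to level $N$. The transport, diffusion and weight manipulations you describe are essentially correct.

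The gap is in the treatment of the residual $\int|f_{k,N}|^{q-2}|\Phi_i|^2 e^{\lambda e_k}$. Your unweighted Young inequality $y_k^{(q-2)/q}y_{k+1}^{2/q}\le\tfrac{q-2}{q}y_k+\tfrac{2}{q}y_{k+1}$ leaves a $y_k$ term whose coefficient, after summing over $i\le k$, is of order $C(q,\sigma)\,k$ with \emph{no} factor of $L$. Thus you actually obtain $\dot y_k\le C_1 k\,y_k+C_2 Lk\,y_{k+1}$, and Gronwall produces factors $e^{C_1(k+j)(s_j-s_{j+1})}$ at each level of the iteration. Since the iteration runs through levels $k,\ldots,N-1$, the accumulated exponential is of order $e^{C_1 N T}$, and your assertion that ``every exponential factor is bounded by $2$'' under $T\le\min(1,1/(4L\max(F_0,F)))$ is false: nothing in that restriction controls $C_1 N T$ for large $N$, nor even $C_1 k T$ for large $k$. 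The combinatorics you sketch then do not reduce the terminal term to $F^k 2^{2k-N-1}$, and the initial-data sum does not give exactly $2^k F_0^k$ for the full range of $k$.

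The paper closes this hole by inserting a parameter in the Young step:
\[
\int|f_{k,N}|^{q-2}|\Phi_i|^2 e^{\lambda e_k}\le \tfrac{q-2}{q}\,\lambda^2\int|f_{k,N}|^q e^{\lambda e_k}+\tfrac{2}{q\lambda^{q-2}}\int|\Phi_i|^q e^{\lambda e_k},
\]
so that the first term carries an extra $\lambda^2$. Summed over $i\le k$, it is then dominated by the leftover $-\tfrac{\Lambda}{2}\lambda^2\int e_k|f_{k,N}|^q e^{\lambda e_k}$ coming from $\dot\lambda$, using $\sum_{i\le k}(1+|v_i|^2)\le e_k$. This yields the \emph{clean} inequality $\dot X_k\le Lk\,X_{k+1}$ with no $X_k$ on the right, so the iteration is purely polynomial in $t$ (iterated time integrals over simplices, no exponentials), and the binomial bookkeeping $\binom{l-1}{k-1}\le 2^{l-1}$ closes exactly as stated. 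Two minor points: what you call Lemma~\ref{technicalLemma} is in fact the full differential-inequality lemma, not just the bound on $\int|\Phi_i|^q e^{\lambda e_k}$; and the passage from $e^{\lambda e_k}$ to $e^{\lambda e_{k+1}}$ uses only $\phi\ge 0$ (giving $e_{k+1}\ge e_k+1+|v_{k+1}|^2$), not the exponential integrability~\eqref{expphi}.
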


Proposition~\ref{propLp} shows that the corresponding $L^q$ norm of a marginal at order $k$ behaves like $C^k$ for some constant $C$. This is the expected scaling for propagation of chaos and tensorized marginals $f_k=f^{\otimes k}$.

However Proposition~\ref{propLp} also presents several intriguing features that we want to highlight.

\smallskip

$\bullet$ {\em Vlasov-Poisson-Fokker-Planck in higher dimensions.} Proposition~\ref{propLp} handles just as easily Coulombian interactions in any dimension $d$, and not only dimension~$d=2$ as Theorem~\ref{Conv}. Therefore,  Proposition~\ref{propLp} would imply some form of propagation of chaos for the Vlasov-Poisson-Fokker-Planck system in any dimension if we are able to consider initial $N$-particles laws $f_N^0$ which are
$f^0$-chaotic as $N \to +\infty$ and whose marginals $f^0$ and associated solution $f_n$ to the forward Kolmogorov equation satisfy \eqref{assumptlq}. While there are examples of such initial data, take $f_N^0=Z\,\exp(-e_N)$ for instance, they demand some sort of truncation or decay of the configurations with high energy. This is not satisfying because we cannot even take $f^0_N=(f^0)^{\otimes N}$: Assumption~\eqref{assumptlq} cannot hold in such a case as $e^{\lambda(0)\,e_k}$ is not integrable if $K$ is the Poisson kernel in dimension $d>2$. The issue is that by taking $f^0_N=(f^0)^{\otimes N}$, we allow some configurations with high potential energy. And roughly speaking the existence time $T$ in the proposition vanishes as the starting potential energy increases in that case. 

\smallskip

$\bullet$ {\em Repulsive potentials.} Proposition~\ref{propLp} does require repulsive potentials $\phi\geq 0$ as this assumption is critical in the proof. The repulsive assumption on the potential only appears to be needed to handle the discrete many-particle system. The extension to non-repulsive settings remains an open problem.

\smallskip

$\bullet$ {\em Extension to the stochastic case of mildly singular kernels}. A special case concerns mildly singular kernels $K$ with $K\in L^p$ for some $p>1$ s.t. $\phi\in L^\infty$. In that situation, by considering $\phi+\|\phi\|_{L^\infty}$ instead of $\phi$, yielding the same interaction kernel~$K$, we can always ensure that $\phi\geq 0$. For example this easily extends for the first time to the stochastic settings the results of~\cite{HauJab1,HauJab2}, that had been obtained only for deterministic second order systems with $|K|\lesssim |x|^{-\alpha}$ for $\alpha<1$.

\smallskip

$\bullet$ {\em Convergence for finite times.} We finally emphasize that Proposition~\ref{propLp}, just as Theorem~\ref{Conv},  holds over a finite time interval, independent of $N$. This may initially appear puzzling since we are dealing with linear equations for any fixed $N$. However because those estimates are essentially independent of $N$, they also extend to the non-linear limiting Vlasov equation. Moreover Proposition~\ref{propLp} includes a propagation of Gaussian moments in velocity over the marginals, from the term $e^{\lambda(t)\,e_k}$ and the definition~\eqref{ek} of $e_k$. The propagation for all times of such moments for Vlasov--Poisson is only known in dimension $d=2$, see~\cite{UkaOka,De}, and dimension~$d=3$,  see~\cite{Bouchut,GasJabPer,HolMio,LioPer,OnoStr,Pal,Pfa,ReiWec,Schaeffer,VicOdw} as cited in the introduction; it also requires in dimension~$3$ the use of dispersion estimates that are not present in our proof. As we already noted, Proposition~\ref{propLp} is in fact valid in any dimension which naturally limits it to some given finite time interval.

  \subsection{The case of first order system}
  While we focus on second order systems, we also emphasize that our method directly applies to first order systems on bounded domains (in a much simpler manner in fact) and provides the mean-field limit under very weak assumptions on the kernel~$K$ again.
  Consider in that case
\begin{equation}
\frac{d}{dt} X_i(t)=\frac{1}{N}\,\sum_{j\neq i} K(X_i-X_j)\,dt+\sigma\,dW_i,,\quad X_i(t=0)=X_i^0,
\label{Npart1st}
\end{equation}
fully on the torus~$\Pi^d$.  The mean-field limit is similar to~\eqref{vlasov}
\begin{equation} 
\partial_t f+ (K\star_x f)\cdot\nabla_x f=\frac{\sigma^2}{2}\,\Delta_x f.
\label{vlasov1st}
\end{equation}
Similarly the joint law $f_N(t,x_1,\ldots,x_N)$ solves an appropriately modified Liouville equation
\begin{equation}
 \partial_t f_N +\sum_{i=1}^N  \frac{1}{N}\,\sum_{j=1}^N K(x_i-x_j)\cdot
 \nabla_{x_i} f_N=\frac{\sigma^2}{2}\,\sum_i \Delta_{x_i} f_N. \label{Liouville1st}
\end{equation}
Because system~\eqref{Npart1st} does not involve velocities, many technical difficulties in our proofs actually vanish. For example, we do not need anymore to add assumptions such as~\eqref{gaussiandecay}. We do not need either to impose that $K$ derives from a potential and hence do not require assumptions like~\eqref{expphi} either.
  We then have the following equivalent of Theorem~\ref{Conv}.
  \begin{theorem} \label{Conv1st} Assume that  
\[
K \in L^p ({\Pi}^d) \quad \hbox{for some}\quad  p>1,\qquad (\udiv K)_-\in L^\infty(\Pi^d),
\]
where $x_-$ denotes the negative part of $x$.
Let $f$ be the unique smooth solution to the Vlasov equation~\eqref{vlasov1st} with initial data $f^0\in C^\infty(\Pi^d)$.  Consider moreover an entropy solution~$f_N$ to~\eqref{Liouville1st}, still in the sense of subsection~\ref{Liouvillewellposed},  with initial data $f_N^0\in L^\infty(\Pi^{dN})$. Assume that  $f_{k,N}^0$ converges weakly in $L^1$ to $(f^0)^{\otimes k}$for each fixed $k$  and that  
\[
\|f_{k,N}^0\|_{L^\infty(\Pi^{dN})}\leq M^k ,
\]
for some $M>0$ and for all $k\leq N$.
Then there exists $T^*$ depending only on $M$, $\|K\|_{L^p}$ and $\|(\udiv K)_-\|_{L^\infty}$ such that $f_{k,N}$, given by \eqref{marginaldef},  weakly converge to $f_k= f^{\otimes k}$ in $L^q_{loc} ([0,\;T^\star] \times \Pi^{kd})$ for any $k$, and any $2<q<\infty$, with $1/q+1/p\le 1$.
\end{theorem}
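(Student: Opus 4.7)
The plan is to mirror the route used for Theorem~\ref{Conv}: first prove a first-order analog of Proposition~\ref{propLp} providing uniform-in-$N$ bounds on the $L^q$ norms of the marginals $f_{k,N}$, and then extract a weak limit and identify it with $f^{\otimes k}$ through the BBGKY hierarchy. Because the state space $\Pi^d$ is compact and velocities are absent, several ingredients of Proposition~\ref{propLp} become unnecessary: the Gaussian weight $e^{\lambda(t)\sum_i(1+|v_i|^2)}$ appearing in \eqref{ek} may be dropped entirely, and the potential weight $e^{\lambda(t)N^{-1}\sum_{ij\leq k}\phi(x_i-x_j)}$, which in the kinetic setting absorbed the term produced by $v_i\cdot\nabla_{x_i}$ hitting the weight and required $\phi\geq 0$, is no longer needed either. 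Its role is played instead by the hypothesis $(\udiv K)_-\in L^\infty$, which bounds from below the integration-by-parts remainder of the self-transport term $\sum_j K(x_i-x_j)\cdot\nabla_{x_i}(|f_{k,N}|^q)$.

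Concretely, testing the BBGKY equation for $f_{k,N}$ against $q|f_{k,N}|^{q-2}f_{k,N}$ and integrating by parts yields
\begin{equation*}
\frac{d}{dt}\int|f_{k,N}|^q \;+\; \tfrac{4(q-1)\sigma^2}{q}\sum_{i\leq k}\int\bigl|\nabla_{x_i}|f_{k,N}|^{q/2}\bigr|^2 \;\leq\; I_{\mathrm{self}}+I_{\mathrm{coll}},
\end{equation*}
where $I_{\mathrm{self}}$ collects the terms with indices $j\leq k$ and is controlled by $ck\,\|(\udiv K)_-\|_{L^\infty}\int|f_{k,N}|^q$, while $I_{\mathrm{coll}}$, involving $f_{k+1,N}$, is handled through the H\"older estimate \eqref{regularization}: a factor $K\in L^p$ is traded against the integrability of $f_{k+1,N}$, at the price of a fractional derivative that is then absorbed by the dissipation via Sobolev embedding. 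This is the first-order analog of Lemma~\ref{technicalLemma} and is genuinely simpler here since diffusion and singular drift act on the same variable. The outcome is a hierarchy of differential inequalities of the schematic form $\dot U_k\leq c_1 k U_k + c_2 k\, F(U_k,U_{k+1})$ with $U_k=\int|f_{k,N}|^q$ and $F$ homogeneous of degree one, and the ansatz $U_k(t)\leq a(t)^k$ used in Proposition~\ref{propLp} closes it on some $[0,T^*]$ whose length depends only on $M$, $\|K\|_{L^p}$ and $\|(\udiv K)_-\|_{L^\infty}$, yielding the uniform bound $\|f_{k,N}\|_{L^\infty([0,T^*];L^q(\Pi^{kd}))}\leq C_0^k$.

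Armed with this bound, the mean-field limit follows by standard compactness. Combined with a control on $\partial_t f_{k,N}$ in a negative-regularity space read off from the hierarchy equation, the uniform $L^q$ bound gives via Aubin--Lions strong $L^r_{loc}$ compactness for some $r<q$, hence weak $L^q_{loc}$ convergence of a subsequence of $f_{k,N}$ to some limit $\bar f_k$. The assumption $1/p+1/q\leq 1$ is precisely what is needed in order to pass to the limit in the nonlinear product $K(x_i-x_{k+1})f_{k+1,N}$ when tested against smooth compactly supported functions, so that $\bar f_k$ solves the limiting Vlasov hierarchy associated to~\eqref{vlasov1st}. Since $f^{\otimes k}$ solves the same hierarchy with initial data $(f^0)^{\otimes k}$, uniqueness within the class of $L^\infty_t L^q_x$ solutions with $C^k$ growth — obtained by a further relative $L^q$ estimate of the same type — forces $\bar f_k=f^{\otimes k}$ and the whole sequence converges. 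The main obstacle is the closure of the hierarchy of differential inequalities: the diffusion must be strong enough to absorb the fractional derivative generated by H\"older when $p$ is close to~$1$, while the geometric $C^k$ scaling in $k$ must be preserved; once this is in place, every remaining step is essentially soft post-processing.
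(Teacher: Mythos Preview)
Your overall route is exactly what the paper has in mind (the paper does not give a separate proof but says the second-order argument carries over ``in a much simpler manner''): drop the weight $e^{\lambda e_k}$ entirely, replace the role of $\phi\geq 0$ by the bound on $(\udiv K)_-$ to control the self-interaction term, run the analog of Lemma~\ref{technicallemma} to get a recursion $U_k(t)\leq U_k(0)+kL\int_0^t U_{k+1}$, close it via Lemma~\ref{lemsimple}, then pass to the limit and prove uniqueness on the limiting hierarchy.

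One point deserves correction, however. In your handling of $I_{\mathrm{coll}}$ you speak of ``a fractional derivative that is then absorbed by the dissipation via Sobolev embedding'', and later worry whether ``the diffusion [is] strong enough to absorb the fractional derivative\ldots when $p$ is close to $1$''. No Sobolev embedding or fractional derivative enters: after integrating by parts, the collision term is simply
\[
(q-1)\sum_{i\leq k}\int |f_{k,N}|^{q-2}\,\nabla_{x_i}f_{k,N}\cdot\!\int K(x_i-x_{k+1})\,f_{k+1,N}\,dx_{k+1},
\]
and Cauchy--Schwarz followed by Young's inequality gives
\[
\varepsilon\int |f_{k,N}|^{q-2}|\nabla_{x_i}f_{k,N}|^2+C_\varepsilon\int |f_{k,N}|^q+C_\varepsilon\int\Bigl|\int K(x_i-x_{k+1})f_{k+1,N}\,dx_{k+1}\Bigr|^q.
\]
The first term is absorbed by the dissipation for any $\varepsilon$ small, and the last is bounded by $\|K\|_{L^p}^q\int|f_{k+1,N}|^q$ via the H\"older estimate~\eqref{regularization}, using only $1/p+1/q\leq 1$. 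This works for every $p>1$ with no dimensional restriction and no smallness of $p^{-1}$; your concern about $p$ close to $1$ is therefore unfounded once the argument is written this way. Likewise, Aubin--Lions is not needed for the limit: weak-$*$ compactness in $L^\infty_t L^q_x$ from the uniform bound suffices, since the hierarchy is linear in $f_{k+1,N}$ and $K$ is a fixed $L^p$ function.
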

Because it is not our main focus, we do not give a distinct proof of Theorem~\ref{Conv1st}.

As mentioned above, there exists now a large literature for the mean field limit of 1st order systems in the stochastic case, with many recent progress for singular kernels. We refer for example to the derivation of 2d Navier-Stokes from a system of many vortices in \cite{Osada, FHM-JEMS, JabWan2}. The derivation of the 2d Keller-Segel system, corresponding to attractive Coulombian potentials, was recently obtained in \cite{BrJaWa,Tardy}, see also \cite{FouTar} for a precise description of the collisions leading to the blow-up. We also cite \cite{Lacker} which only requires the kernel to be in an Orlicz space similar to $Exp$, together with  \cite{LaLe} which obtains global in time regularity for Lipschiz kernels with a smallness assumption on ${\rm div} K$.

All those results require stronger assumptions on the kernel $K$ than just $K\in L^p$ with $p>1$ as here. A similar scaling was however obtained in the seminal \cite{Se}  on first order systems with no diffusion. The breakthrough method in \cite{Se} is based on a modulated energy between the empirical measure and the limit and it applies to Riesz kernels where $K \sim {1}/{|x|^\alpha}$ with $\alpha<d$ (corresponding to $K\in L^p$ with $p>1$), with either a repulsive gradient flow or Hamiltonian interactions, or alternatively where $K \ast f \in W^{1, \infty}$. Uniform in time propagation of chaos was later obtained in \cite{RoSe} including diffusion  with the restriction $\alpha<d-1$ using the modulated energy method and some relaxation rates properties. This was recently improved in \cite{CRS} to again $\alpha<d$ combining precised relaxation rates with the new modulated free energy introduced in \cite{BrJaWa}.  One obvious advantage of our method here is that it allows for a much more general form of interaction, with singularities far away from the origin. On the other hand, Theorem~\ref{Conv1st} does require a non-vanishing diffusion and is again only valid for a finite time, instead of the much stronger uniform in time estimates above.

Contrary to the case of 2nd order systems, this short time limitation appears less fundamental as many limiting systems do not blow up, with the obvious exception of attractive interactions such as Keller-Segel. We conjecture that the present method could lead to large time results by taking advantage of the full non-degenerate diffusion for 1st order systems.

%
\subsection{Our notion of entropy solution for the hierarchy; the well-posedness of Eq.~\eqref{Liouville}\label{Liouvillewellposed}}
\subsubsection{The definition}
Being non-linear, our estimates cannot be performed on any weak solutions. Moreover, the concept of solution for $f_N$ are carried over the marginals $f_{k,N}$ and not just the joint law $f_N$ so that we also need an appropriate notion of entropy solutions on those marginals.

\smallskip

\noindent {\it The hierarchy for the marginals from the Liouville equation.} 
From Eq.~\eqref{Liouville}, the $f_{k,N}$ solve the so-called BBGKY hierarchy
\begin{equation}
\begin{split}
  \partial_t f_{k,N}&+\sum_{i=1}^k v_i\cdot\nabla_{x_i} f_{k,N}+\sum_{i\leq k}\,\frac{1}{N}\,\sum_{j\leq k} K(x_i-x_j)\cdot\nabla_{v_i} f_{k,N}\\
  & +\frac{N-k}{N} \sum_{i\leq k} \nabla_{v_i} \cdot \int_{\Pi^d\times \R^d} f_{k+1,N}\,K(x_i-x_{k+1}) dx_{k+1} dv_{k+1}\\
  &=\frac{\sigma^2}{2}\,\sum_{i\leq k} \Delta_{v_i} f_{k,N}.
  \end{split}\label{hierarchy}
\end{equation}
If $f_N$ belongs to $L^\infty$ and verifies~\eqref{gaussiandecay}, then all marginals $f_{k,N}$ belong to $L^\infty_t L^q_{x,v}$ for every $q<\infty$ with similar Gaussian decay. For simplicity, we denote here abstractly $L^q_{x,v}$ any space $L^q(\Pi^{kd}\times\R^{kd})$ when there is no confusion about the dimension $k$,  as in our case. We also denote by $L^q_{\lambda e_k}$ the weighted $L^q$ space
\[
\|f\|_{L^q_{\lambda e_k}}^q=\int_{\Pi^{kd}\times\R^{kd}} |f|^q\,e^{\lambda\,e_k}.
\]
Since $K\in L^p$, for some $p>1$, then by using a direct H\"older inequality those bounds on the $f_{k,N}$ implies that  
\[
\int_{\Pi^d\times \R^d} f_{k+1,N}\,K(x_i-x_{k+1})\,dx_{k+1}\,dv_{k+1}\in L^\infty_t L^q_{x,v},
\]
for all $q<\infty$. This allows to immediately and rigorously derive~\eqref{hierarchy} from Eq.~\eqref{Liouville}. 

\medskip

\noindent {\it  Definition of entropy solutions.} 
We denote the advection component of~\eqref{hierarchy}
\begin{equation} \label{L}
L_k=\sum_{i\leq k} v_i\cdot\nabla_{x_i}+\frac{1}{N}\,\sum_{i,j\leq k} K(x_i-x_j)\cdot\nabla_{v_i}.
\end{equation}
The argument above implies that the only difficulties to propagate our estimates in~\eqref{hierarchy} stem from $L_k$. Consequently we define our entropy solution in the following manner: A function~$f_N\in L^\infty([0,\ 1]\times\Pi^{dN}\times \R^{dN})$ satisfying~\eqref{gaussiandecay} is an entropy solution iff all marginals~$f_{k,N}$ for $1\leq k\leq N$, as defined by~\eqref{marginaldef}, satisfy that for any $T\in [0,\ 1]$, any $1<q<\infty$ and any $\lambda<\lambda_0$
\begin{equation}
\begin{split}
\int_0^T\!\!\int_{\Pi^{dk}\times\R^{dk}} &e^{\lambda\,e_k}\,|f_{k,N}|^{q-1}\,\\&
\mbox{sign}(f_{k,N}) \,L_k\,f_{k,N}\,dx_1\,dv_1\dots dx_k\,dv_k\,dt\geq 0.
\end{split}
\label{entropyformal}
\end{equation}
Inequality~\eqref{entropyformal} is still somewhat formal and should be understood in the following rigorous sense: For some smooth convolution kernel~$K_\eps$, one has that
\begin{equation}
\begin{split}
  \liminf_{\eps\to 0}\int_0^T\!\!\int_{\Pi^{dk}\times\R^{dk}}&e^{\lambda\,e_k}\, |K_\eps^{\otimes k}\star f_{k,N}|^{q-1}\, \mbox{sign}(K_\eps^{\otimes k}\star f_{k,N})\\
  &K_\eps^{\otimes k}\star\,(L_k\,f_{k,N})\,dx_1\,dv_1\dots dx_k\,dv_k\,dt\geq 0, \label{entropyrigorous}
\end{split}
\end{equation}
where we denote
\[
\begin{split}
  K_\eps^{\otimes k}\star\,g=\int_{\Pi^{dk}\times\R^{dk}} &K_\eps(x_1-y_1,v_1-w_1)\,\dots K_\eps(x_k-y_k,v_k-w_k)\\
  &g(y_1,w_1,\ldots,y_k,w_k)\,dy_1\,dw_1\dots dy_k\,dw_k
\end{split}
\]
with $K_\eps \to \delta$ when $\eps \to 0$.
However it is usually more delicate to determine whether any weak solution~$f_N$ in $L^\infty$ and with the bound~\eqref{gaussiandecay} is an entropy solution according to our definition. For linear advection-diffusion equations such as~\eqref{Liouville}, this is usually approached through the notion of renormalized solutions as introduced in~\cite{DL}. In that context,~\eqref{entropyrigorous} is obviously similar to the classical commutator estimate at the basis of many methods for renormalized solutions.

\begin{remark}
{\rm 1)} We first remark that~\eqref{entropyformal} is automatically satisfied if we have classical  solutions. Indeed $L_k$ is an antisymmetric operator so that we expect it to propagate $L^q$ norms so that if all terms are smooth
\[
|f_{k,N}|^{q-1}\, \mbox{sign}(f_{k,N}) \,L_k f_{k,N}=L_k\,|f_{k,N}|^q.
\]

\noindent {\rm 2)} We immediately observe that the reduced energy $e_k$ is formally invariant under the advection component of~\eqref{hierarchy}:
\[
L_k\,e_k=\frac{2}{N}\,\sum_{i,j\leq k} v_i\cdot \nabla_{x_i} \phi(x_i-x_j)+\frac{2}{N}\,\sum_{i,j\leq k} K(x_i-x_j)\cdot v_i=0,
\]
since $K=-\nabla_x \phi$. In the same way, we have $L_k\,\Phi(e_k)=0$, for any locally Lipschitz function $\Phi$. 

\noindent {\rm 3)} If $K$ is smooth and $f_N$ is a classical solution to~\eqref{Liouville}, we would hence immediately have equality in \eqref{entropyformal}. With $K$ only in $L^p$, it would be straightforward to obtain one entropy solution in the sense defined above, through passing to the limit in a sequence of solutions for a smoother kernel~$K$.
\end{remark}

\begin{remark} 
There exists an extensive literature on renormalized solutions with a comparably large variety of potential assumptions that one may consider. While we cannot do justice to this question in this short discussion, we briefly mention for instance~\cite{Ha} that studies the specific case of the Liouville equation~\eqref{Liouville} for second order systems without diffusion. In the present setting of a constant non-vanishing diffusion, we also refer to~\cite{BoKrRoSh,LeLi,LebLio} that provide broad results of well-posedness for velocity fields in $L^p$. 

We in particular note that renormalized solutions apply to the case $K\in L^p$ with $p> 2$ and $f_N$ in $L^\infty$  with $\nabla_{v_i} f_N^{q/2}\in L^2$ for any $q<\infty$ and satisfying the extension of~\eqref{gaussiandecay},
\[
\sup_{t\leq 1}\int_{\Pi^{dN}\times\R^{dN}} e^{\lambda_0\,e_k}\,f_N\,dx_1\,dv_1\dots dx_N\,dv_N<\infty.
\]
The latter estimates are natural for the Liouville~\eqref{Liouville}, as demonstrated by Lemma~\ref{technicalLemma} for the case $k=N$ in Section~\ref{proof}.
In that situation, all marginals $f_{k,N}$ belong to $L^\infty_t L^q_{x,v}$ for every $q<\infty$ with similar exponential decay in $e_k$ and with as well $\nabla_{v_i} f_{k,N}^{q/2}\in L^r_{t,x,v}$ for any $r<2$.  
This regularity easily allows to prove that~\eqref{entropyrigorous} holds for  $\lambda<\lambda_0$.

We also mention that so-called mild solutions can also offer a natural way to prove~\eqref{entropyrigorous}. We simply refer to~\cite{Bouchut,CS} for such formulations through the Fokker--Planck kernel in whole space, or to~\cite{Clark} or~\cite{De,VicOdw} for periodic conditions.
\end{remark}
\subsubsection{\it Strong solutions up to the first collision.}  We also emphasize that, in the case of repulsive kernels smooth out of the origin but with singular potentials $\lim_{x\to 0} \phi(x)=+\infty$, a straightforward bound on the energy of the system can easily lead  to strong solutions on the many-particle system~\eqref{Npart}, bypassing the need for entropy or renormalized solutions.

Very roughly, if $K\in C^\infty(\Pi^d\setminus\{0\})$, then up to the conditional time of first collision in~\eqref{Npart}, we may write that
\[
d\left(\sum_{i=1}^N |V_i|^2+\frac{1}{N}\,\sum_{i\neq j} \phi(X_i-X_j)\right)=\sigma^2\,dt
+\sum_{i=1}^N 2\,\sigma \, V_i\cdot dW_i.
\]
This implies that, with probability $1$, the total energy remains finite if it was so initially. Because $\lim_{x\to 0} \phi(x)=+\infty$, it also implies that collisions almost surely never happen. This argument would in particular apply to the Coulombian case in any dimension~$d\geq 2$.

To conclude this discussion of the well posedness of~\eqref{Liouville} or~\eqref{Npart} for a fixed~$N$, we emphasize the estimates that we described here cannot easily be made uniform in $N$. The previous discussion of the energy bound on the system~\eqref{Npart} for the Coulombian interaction in dimension~$d=2$ is an excellent illustration: If we have the following bound
\[
\sum_{i=1}^N |V_i|^2+\frac{1}{N}\,\sum_{i\neq j} \phi(X_i-X_j)\leq E
\]
with some large probability on some time interval, and for $\phi(x)=-\log |x|$ then this only proves that for any $i\neq j$
\[
|X_i-X_j|\geq e^{-N\,E},
\]
which is indeed finite for any fixed~$N$ but is completely unhelpful when considering the limit $N\to \infty$.

Hence the present discussion remains focused on renormalized solutions for a fixed~$N$. Quantitative approach to renormalized solutions have for example been introduced in~\cite{CrDe}, which are based on the propagation of a sort of $\log$-derivative on the characteristics; see also for example the discussion on Eulerian variants in~\cite{BrJa}. This leads to an interesting and so far mostly fully open question as to whether it would be possible to obtain quantitative bounds that would combine the limit $N\to\infty$ with some regularity estimates on the solution for a fixed~$N$.  
%
\section{Proof of the main results\label{proof}}

\subsection{The BBGKY and Vlasov hierarchies\label{hierarchies}}
Using~\eqref{vlasov}, the tensorized limits $f_k= {\overline f}^{\otimes k}$ satisfy the following Vlasov hierarchy
 \begin{equation}
\begin{split}
 \partial_t f_k &+ \sum_{i=1}^k v_i\cdot\nabla_{x_i} f_k\\
&  +\sum_{i=1}^k  (K\star  \int_{\R^d}  f dv) \cdot\nabla_{v_i} f _k 
=\frac{\sigma^2}{2}\,\sum_{i=1}^k \Delta_{v_i} f_k. 
\end{split}\label{vlasovhierarchy}
\end{equation}
To avoid repeating the analysis working on \eqref{hierarchy} or \eqref{vlasovhierarchy}, we introduce the generalized hierarchy equation
\begin{equation}
\begin{split}
 & \hspace{-0.5cm} \partial_t F_{k,N}+\sum_{i=1}^k v_i\cdot\nabla_{x_i} F_{k,N}+\sum_{i\leq k}\,\frac{\gamma}{N}\,\sum_{j\leq k} K(x_i-x_j)\cdot\nabla_{v_i} F_{k,N}\\
  & +\frac{N-\gamma k}{N}\sum_{i\leq k} \nabla_{v_i} \cdot \int_{\Pi^d\times \R^d} F_{k+1,N}\,K(x_i-x_{k+1})dx_{k+1}dv_{k+1}\\
  &=\frac{\sigma^2}{2}\,\sum_{i\leq k} \Delta_{v_i} F_{k,N}+R_{k,N}.
  \end{split}\label{hierarchyext}
\end{equation}
Note that Eq.~\eqref{hierarchyext} is exactly Eq.~\eqref{vlasovhierarchy} for $\gamma=0,\; R_{k,N}=0$ and exactly Eq.~\eqref{hierarchy} for $\gamma=1,\; R_{k,N}=0$. In the same spirit we denote
\[\begin{split}
&e_{k,\gamma}=\sum_{i\leq k} (1+|v_i|^2)+\frac{\gamma}{N}\,\sum_{i,j\leq k} \phi(x_i-x_j),\\
&L_{k,\gamma}=\sum_{i\leq k} v_i\cdot\nabla_{x_i}+\frac{\gamma}{N}\,\sum_{i,j\leq k} K(x_i-x_j)\cdot\nabla_{v_i}
\end{split}
\]
and observe that we of course still have $L_{k,\gamma}\,e_{k,\gamma}=0$.

The main technical contribution of this section and of the paper is Lemma~\ref{technicalLemma} stated in subsection~\ref{sectechnicalLemma}, which provides estimates for the solutions to \eqref{hierarchy}.
We will then use the uniform bound on the $k$-marginals $f_{k,N}$ for the proof of Prop.~\ref{propLp}. Prop~\ref{propLp} allows passing to the limit in the hierarchy~\eqref{hierarchy} and a final use of Lemma~\ref{technicalLemma} leads to prove uniqueness to the limiting hierarchy~\eqref{vlasovhierarchy} to conclude result of Theorem~\ref{Conv}.
%
\subsection{The key technical lemma}\label{sectechnicalLemma}
We first present the key technical lemma
which links the $k$-marginal $L^q_w$ control to the $(k+1)$-marginal $L^q_w$ estimate control.  
\begin{lemma} \label{technicalLemma}
  Assume that $K\in L^p(\Pi^d)$, for some $p>1$. There exist some constants $\Lambda$, $C$, $\theta$ depending only on $q$, $d$ and $\sigma$ s.t
  \[
\begin{split}
  \|F_{k,N}\|_{L^q_{\lambda(t)\,e_k}}^q \leq &\|F_{k,N}(t=0)\|_{L^q_{\lambda(0) \,e_k}}^q\\& +q\,\int_0^t\int |F_{k,N}|^{q-1}\,\mbox{sign}(F_{k,N}) R_{k,N}\,e^{\lambda(s)\,e_{k,\gamma}}\,ds \\
  &+ k\,\frac{N-\gamma\,k}{N}\,\frac{C}{\lambda^{\theta}(t)}\,\|K\|^q_{L^p}\,\int_0^t \|F_{k+1,N}(s)\|_{L^q_{\lambda(s)\,e_{k+1}}}^q\,ds, 
\end{split}
\]
for any entropy solution $F_{k,N}$ to~\eqref{hierarchyext} in the sense of subsection~\ref{Liouvillewellposed} and satisfying~\eqref{gaussiandecay} with $F_{k,N}\in L^q_{\lambda(t)\,e_{k,\gamma}}$, and for any $2\leq q<\infty$ such that $1/q+1/p\leq 1$, with $\lambda(t)$ defined by  $\lambda(t)=\frac{1}{\Lambda\,(1+t)}$.
  \end{lemma}
\begin{proof}
To be made fully rigorous, many calculations in this proof should involve a convolution kernel $K_\eps$, estimating
\[
\frac{d}{dt}\int |K_\eps^{\otimes k}\star F_{k,N}|^{q}\,e^{\lambda(t)\,e_{k,\gamma}},
\]
and passing to the limit in $\eps\to 0$ while using appropriately the entropy condition~\eqref{entropyrigorous}. For simplicity however, we will only present the corresponding formal calculations.

We hence calculate in a straightforward manner
\[
\begin{split}
  &\frac{d}{dt}\int |F_{k,N}|^{q}\,e^{\lambda(t)\,e_{k,\gamma}}=q\,\int |F_{k,N}|^{q-1}\,
  \mbox{sign}(F_{k,N}) \partial_t F_{k,N}\,e^{\lambda(t)\,e_{k,\gamma}}\\
  &\quad+\lambda'(t)\,\int e_{k,\gamma}\,|F_{k,N}|^{q}\,e^{\lambda(t)\,e_{k,\gamma}}.
  \end{split}
\]
Inserting now in this identity the definition of $\lambda(t)$ and the  equation~\eqref{hierarchy} we find
\[
\begin{split}
  &\frac{d}{dt}\int |F_{k,N}|^{q}\,e^{\lambda(t)\,e_{k,\gamma}}=-q\,\int |F_{k,N}|^{q-1}\,
  \mbox{sign}(F_{k,N}) (L_{k,\gamma} \,F_{k,N})\,e^{\lambda(t)\,e_{k,\gamma}}\\
  &\quad +q\,\frac{\sigma^2}{2}\,\int |F_{k,N}|^{q-1}\,\mbox{sign}(F_{k,N}) \left(\sum_{i\leq k} \Delta_{v_i}\,F_{k,N}\right)\,e^{\lambda(t)\,e_{k,\gamma}}\\
  &\quad-q\,\frac{N-\gamma\,k}{N}\,\sum_{i\leq k}\int |F_{k,N}|^{q-1}\,\mbox{sign}(F_{k,N}) \\
  &\qquad\qquad\qquad\nabla_{v_i}\cdot\int K(x_i-x_{k+1})\,F_{k+1,N}\,dx_{k+1}\,dv_{k+1}\,e^{\lambda(t)\,e_{k,\gamma}}\\
  &\quad-\Lambda\,\lambda^2(t)\,\int e_{k,\gamma}\,|F_{k,N}|^{q}\,e^{\lambda(t)\,e_{k,\gamma}} +q\,\int |F_{k,N}|^{q-1}\,\mbox{sign}(F_{k,N}) \, R_{k,N}\,e^{\lambda(t)\,e_{k,\gamma}}.
  \end{split}
\]
Note that
\[
q\,|F_{k,N}|^{q-1}\,\mbox{sign}(F_{k,N})\,(L_{k,\gamma}\,F_{k,N})=L_{k,\gamma} \,|F_{k,N}|^q,
\]
so that by integration by parts, we formally have that
\[
\begin{split}
  &q\,\int |F_{k,N}|^{q-1}\,\mbox{sign}(F_{k,N})\,(L_{k,\gamma}\,F_{k,N})\, e^{\lambda(t)\,e_{k,\gamma}}\\
& \qquad  = -\int |F_{k,N}|^q\,L_{k,\gamma}\,e^{\lambda(t)\,e_{k,\gamma}}=0.
\end{split}
\]
On the other hand, again by integration by parts
\[
\begin{split}
  &q\,\frac{\sigma^2}{2}\,\int |F_{k,N}|^{q-1}\,\mbox{sign}(F_{k,N}) \,\left(\sum_{i\leq k} \Delta_{v_i}\,F_{k,N}\right)\,e^{\lambda(t)\,e_{k,\gamma}}\\
  &\quad=-q\,(q-1)\,\sum_{i\leq k}\,\frac{\sigma^2}{2}\,\int |F_{k,N}|^{q-2}\,|\nabla_{v_i} F_{k,N}|^2\,e^{\lambda(t)\,e_{k,\gamma}}\\
  &\quad-2\,q\,\lambda(t)\,\sum_{i\leq k}\,\frac{\sigma^2}{2}\,\int |F_{k,N}|^{q-1}\,
  \mbox{sign}(F_{k,N})\,v_i\cdot\nabla_{v_i} F_{k,N}\,e^{\lambda(t)\,e_{k,\gamma}}.
\end{split}
\]
By Cauchy-Schwartz, since $q\ge 2$, we obtain that
\[
\begin{split}
&q\,\frac{\sigma^2}{2}\,\int |F_{k,N}|^{q-1}\,\mbox{sign}(F_{k,N})\,\left(\sum_{i\leq k} \Delta_{v_i}\,F_{k,N}\right)\,e^{\lambda(t)\,e_{k,\gamma}}\\
  &\quad\leq -q\,(q-1)\,\sum_{i\leq k}\,\frac{\sigma^2}{4}\,\int |F_{k,N}|^{q-2}\,|\nabla_{v_i} F_{k,N}|^2\,e^{\lambda(t)\,e_{k,\gamma}}\\
  &\qquad+  \frac{q}{q-1}\, \lambda^2\,\frac{\sigma^2}{2} \,\int |F_{k,N}|^q\,\sum_{i\leq k} |v_i|^2\,e^{\lambda(t)\,e_{k,\gamma}}. 
\end{split}
\]
Note that since $\phi\geq 0$, we have that $\sum_{i\leq k} |v_i|^2\leq e_k$ and, therefore, combining all our estimates so far, we deduce that
\[
\begin{split}
  &\frac{d}{dt}\int |F_{k,N}|^{q}\,e^{\lambda(t)\,e_{k,\gamma}} \leq -q(q-1)\sum_{i\leq k}\,\frac{\sigma^2}{4}\,\int |F_{k,N}|^{q-2}\,|\nabla_{v_i} F_{k,N}|^2\,e^{\lambda(t)\,e_{k,\gamma}}\\
  &\ -q\,\frac{N-\gamma\,k}{N}\,\sum_{i\leq k}\int |F_{k,N}|^{q-1}\,\mbox{sign}(F_{k,N})\\
  &\qquad\qquad\qquad\nabla_{v_i}\cdot\int K(x_i-x_{k+1})\,F_{k+1,N}\,dx_{k+1}\,dv_{k+1}\,e^{\lambda(t)\,e_{k,\gamma}}\\
  &\ -\frac{\Lambda}{2}\,\lambda^2(t)\,\int e_{k,\gamma}\,|F_{k,N}|^{q}\,e^{\lambda(t)\,e_{k,\gamma}} +q\,\int |F_{k,N}|^{q-1}\,\mbox{sign}(F_{k,N})\,R_{k,N}\,e^{\lambda(t)\,e_{k,\gamma}},
  \end{split}
\]
provided that $\Lambda\geq  \frac{q}{q-1}\,\sigma^2$.

We integrate by parts the second term in the right-hand side to obtain
\[
\begin{split}
  &\sum_{i\leq k}\int |F_{k,N}|^{q-1}\,\mbox{sign}(F_{k,N}) \\
  &\qquad\nabla_{v_i}\cdot\int K(x_i-x_{k+1})\,F_{k+1,N}\,dx_{k+1}\,dv_{k+1}\,e^{\lambda(t)\,e_{k,\gamma}}=RH_1+RH_2,\\
\end{split}
\]
with
\[
\begin{split}
  RH_1 = - (q-1)\, \sum_{i\leq k}&\int |F_{k,N}|^{q-2}\nabla_{v_i} F_{k,N}\\
  &\times \int K(x_i-x_{k+1})\,F_{k+1,N}\,dx_{k+1}\,dv_{k+1}\,e^{\lambda(t)\,e_{k,\gamma}},\\
\end{split}
\]
and
\[
\begin{split}
  RH_2= - 2\lambda(t)\, \sum_{i\leq k}&\int |F_{k,N}|^{q-1}\,
  \mbox{sign}(F_{k,N})\,v_i\\
  & \times \int K(x_i-x_{k+1})\,F_{k+1,N}\,dx_{k+1}\,dv_{k+1}\,e^{\lambda(t)\,e_{k,\gamma}}.
\end{split}
\]
We perform a straightforward Cauchy-Schwartz inequality on both terms to find that
\[
\begin{split}
  RH_2
 & \leq \lambda^2(t) \,\sum_{i\leq k} \int |F_{k,N}|^q\,|v_i|^2\,e^{\lambda(t)\,e_{k,\gamma}}\\
  &\quad + \sum_{i\leq k} \int  |F_{k,N}|^{q-2} \left|\int K(x_i-x_{k+1}) F_{k+1,N}\,dx_{k+1}\,dv_{k+1}\right|^2\,e^{\lambda(t)e_{k,\gamma}},
\end{split}
\]
and similarly
\[
\begin{split}
  &RH_1\leq \frac{\sigma^2}{4}\,\sum_{i\leq k} \int |F_{k,N}|^{q-2}\,|\nabla_{v_i} F_{k,N}|^2\,e^{\lambda(t)\,e_{k,\gamma}}\\
  &+\frac{(q-1)^2}{\sigma^2}\sum_{i\leq k} \int |F_{k,N}|^{q-2}\left|\int K(x_i\!-\!x_{k+1})F_{k+1,N}dx_{k+1}dv_{k+1}\right|^2e^{\lambda(t)e_{k,\gamma}}.
\end{split}
\]
Note that by  Young estimates
\[
\begin{split}
  & \int |F_{k,N}|^{q-2}\,\left|\int K(x_i-x_{k+1})\,\,F_{k+1,N}\,dx_{k+1}\,dv_{k+1}\right|^2\,e^{\lambda(t)\,e_{k,\gamma}} \\
  &\qquad\leq \frac{q-2}{q}\,\lambda^2\,\int |F_{k,N}|^q\,e^{\lambda(t)\,e_{k,\gamma}}\\
&\qquad   +\frac{2}{q\,\lambda^{q-2}}\,\int e^{\lambda(t)\,e_{k,\gamma}}\,\left|\int K(x_i-x_{k+1})\,\,F_{k+1,N}\,dx_{k+1}\,dv_{k+1}\right|^q. 
\end{split}
\]
Therefore, combining together all those terms, we obtain the further estimate
\[
\begin{split}
  \sum_{i\leq k}\int &|F_{k,N}|^{q-1} \mbox{sign}\left(F_{k,N}\right)\nabla_{v_i}\cdot\int K(x_i\!-\!x_{k+1})\,F_{k+1,N}dx_{k+1}dv_{k+1}e^{\lambda(t)\,e_{k,\gamma}}\\
  &\leq \frac{\sigma^2}{4}\,\sum_{i\leq k} \int |F_{k,N}|^{q-2}\,|\nabla_{v_i} F_{k,N}|^2\,e^{\lambda(t)\,e_{k,\gamma}}\\
  &\quad+ \lambda^2(t)\,\left( 1+\frac{(q-2)\,(q-1)^2}{q\,\sigma^2}\right)\, \sum_{i\leq k} \int |F_{k,N}|^q\,(1+|v_i|^2)\,e^{\lambda(t)\,e_{k,\gamma}}\\
  &\quad +\frac{2}{q\,\lambda^{q-2}}\left( 1+\frac{(q-1)^2}{\sigma^2}\right)\\
  &\hspace{2cm}\times \sum_{i\leq k}\int e^{\lambda(t)\,e_{k,\gamma}}\left|\int K(x_i-x_{k+1})\,F_{k+1,N}\,dx_{k+1} dv_{k+1}\right|^q.
\end{split}
\]
Hence, provided that
\[
 \Lambda\geq 2q\,\left(1+\,\frac{(q-2) (q-1)^2}{q\,\sigma^2}\right),
\]
we obtain that
\[
\begin{split}
  &\frac{d}{dt}\int |F_{k,N}|^{q}\,e^{\lambda(t)\,e_{k,\gamma}}   \\
  &\le C_{q,\sigma,d}\,k\frac{N\!-\!\gamma k}{\lambda^{q-2}\,N}\,\int e^{\lambda(t)\,e_{k,\gamma}}\left|\int K(x_1\!-\!x_{k+1})F_{k+1,N}dx_{k+1}dv_{k+1}\right|^q. \\
\end{split}
\]
At this point  is where we take advantage of the specific structure of the hierarchy. 
Denoting $q^*$ the conjugate of $q$ namely such that $1/q^*+1/q=1$, we bound
\[
\begin{split}
 &\hspace{-1cm} \left|\int  K(x_1-x_{k+1})\,F_{k+1,N}\,dx_{k+1}\,dv_{k+1}\right|^q\\
  & \leq \left(\int |K(x_1-x_{k+1})|^{q^*} \,e^{-\frac{q^*}{q}\,\lambda(t)\,|v_{k+1}|^2}\,dx_{k+1}\,dv_{k+1}\right)^{q/q^*}\\
  &\qquad \times \int |F_{k+1,N}|^q\,e^{\lambda(t)\,|v_{k+1}|^2}\,dx_{k+1}\,dv_{k+1},\\
\end{split}
\]
which implies
\[
\begin{split}
  &\hspace{-1cm} \left|\int K(x_1-x_{k+1})\,F_{k+1,N}\,dx_{k+1}\,dv_{k+1}\right|^q\\
& \leq \frac{C_{q,\sigma,d}}{\lambda^{q\,d/2q^*}(t)}\,\|K\|^q_{L^p}\,\int |F_{k+1,N}|^q\,e^{\lambda(t)\,|v_{k+1}|^2}\,dx_{k+1}\,dv_{k+1},
\end{split}
\]
since $q\geq p^*$. Consequently
\[
\begin{split}
  &\int e^{\lambda(t)\,e_{k,\gamma}}\,\left|\int K(x_1-x_{k+1})\,\,F_{k+1,N}\,dx_{k+1}\,dv_{k+1}\right|^q\\
& \leq   \frac{C_{q,\sigma,d}}{\lambda^{qd/2q^*}(t)}\|K\|^q_{L^p}\int |F_{k+1,N}|^q\,e^{\lambda(t)\,|v_{k+1}|^2+\lambda(t)\,e_{k,\gamma}}\,dx_1dv_1\dots dx_{k+1}dv_{k+1}.
  \end{split}
\]
Note that
\[
e_{k+1,\gamma}=e_{k,\gamma}+1+|v_{k+1}|^2+\frac{2\,\gamma}{N}\,\sum_{i\leq k} \phi(x_i-x_{k+1})\geq e_{k,\gamma}+1+|v_{k+1}|^2,
\]
so that
\[
\begin{split}
  &\int e^{\lambda(t)\,e_k} \left|\int K(x_i-x_{k+1})\,\,f_{k+1,N}\,dx_{k+1}\,dv_{k+1}\right|^q\\
&\quad \leq   \frac{C_{q,\sigma,d}}{\lambda^{qd/2q^*}(t)}\,\|K\|^q_{L^p}\,\int |f_{k+1,N}|^q\,e^{\lambda(t)\,e_{k+1}}\,\,dx_1\,dv_1\dots dx_{k+1}\,dv_{k+1}.
  \end{split}
\]
This finally lets us conclude, as claimed, that
\[
\begin{split}
  \frac{d}{dt}\int |f_{k,N}|^{q}\,e^{\lambda(t)\,e_{k,\gamma}} & \leq k\,\frac{N-\gamma\,k}{N}\,\frac{C_{q,\sigma},d}{\lambda^{\theta_{q,d}}(t)}\,\|K\|^q_{L^p}\,\int |f_{k+1,N}|^q\,e^{\lambda(t)\,e_{k+1,\gamma}}\\
  &\quad +q\,\int |F_{k,N}|^{q-1}\,\mbox{sign}(F_{k,N})\,R_{k,N}\,e^{\lambda(t)\,e_{k,\gamma}}.
\end{split}
\]
\end{proof}
%
\subsection{Proof of technical results} \label{conclusion}
We start this subsection with the proof of the Proposition \ref{propLp}.
\begin{proof}[Proof of Proposition~\ref{propLp}]
From the analysis in subsection~\ref{hierarchies} and the assumptions \eqref{gaussiandecay} and~\eqref{assumptlq} of Proposition~\ref{propLp}, we have that $F_{k,N} = f_{k,N}$ is a renormalized solution to~\eqref{hierarchy} and thus~\eqref{hierarchyext} with $\gamma=1$. Moreover, $f_{k,N}$ satisfies the other assumptions  in Lemma \ref{technicalLemma} with $R_{k,N}=0$. Denoting
\[
X_k(t)=\int |f_{k,N}|^{q}\,e^{\lambda(t)\,e_k},
\]
we hence observe that, by Lemma~\ref{technicalLemma}, we have the coupled dynamical inequality system
\[
X_k(t)\leq X_k(0)+k\,L\,\int_0^t X_{k+1}(s)\,ds,
\]
for any $t\in [0,\ 1]$,  where
\[
L=\frac{C}{\lambda^\theta(1)}\,\|K\|^q_{L^p}.
\]
From the assumptions of Proposition~\ref{propLp}, we immediately have that
\begin{equation}
X_k(t)\leq F_0^k+k\,L\,\int_0^t X_{k+1}(s)\,ds.\label{basicinduction}
\end{equation}
We now invoke the following simple lemma
\begin{lemma}\label{indu} 
  Consider any sequence $X_k(t)$ satisfying~\eqref{basicinduction} then one has that
  \begin{equation}
  \begin{split}
X_k(t)\leq &\sum_{l=k}^m F_0^l\,L^{l-k}\,t^{l-k}\,\frac{(l-1)!}{(k-1)!\,(l-k)!}\\
&+L^{m+1-k}\,\int_0^t X_{m+1}(s)\,(t-s)^{m-k}\,\frac{m!}{(k-1)!\,(m-k)!}\,ds.
\end{split}\label{induction}
  \end{equation}
  \label{lemsimple}
  \end{lemma}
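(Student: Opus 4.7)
The plan is to prove the estimate by induction on $m\geq k$, with $k$ held fixed, using only the hypothesis~\eqref{basicinduction} at each stage. The base case $m=k$ is immediate: the sum reduces to the single term $l=k$, which equals $F_0^k$ (using $0!=1$ and $(k-1)!/(k-1)!=1$), and the remainder integral collapses to $kL\int_0^t X_{k+1}(s)\,ds$ since $(t-s)^0=1$ and $k!/((k-1)!\,0!)=k$. Together these reproduce exactly \eqref{basicinduction}.

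For the inductive step, assume \eqref{induction} holds at level $m$ and insert the hypothesis~\eqref{basicinduction} applied at index $m+1$,
\[
X_{m+1}(s)\leq F_0^{m+1}+(m+1)\,L\,\int_0^s X_{m+2}(u)\,du,
\]
into the remainder term $L^{m+1-k}\int_0^t X_{m+1}(s)\,(t-s)^{m-k}\,\frac{m!}{(k-1)!\,(m-k)!}\,ds$. This splits the remainder into two pieces. The first piece, coming from $F_0^{m+1}$, evaluates via $\int_0^t (t-s)^{m-k}\,ds=t^{m+1-k}/(m+1-k)$, producing
\[
F_0^{m+1}\,L^{m+1-k}\,t^{m+1-k}\,\frac{m!}{(k-1)!\,(m+1-k)!},
\]
which is precisely the $l=m+1$ summand in the new statement.

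The second piece requires exchanging the order of integration (Fubini), which is the only nontrivial computational step. One obtains
\[
\int_0^t (t-s)^{m-k}\!\int_0^s X_{m+2}(u)\,du\,ds=\int_0^t X_{m+2}(u)\,\frac{(t-u)^{m+1-k}}{m+1-k}\,du,
\]
and the prefactor becomes $(m+1)\,L^{m+2-k}\,\frac{m!}{(k-1)!\,(m-k)!\,(m+1-k)}=L^{m+2-k}\,\frac{(m+1)!}{(k-1)!\,(m+1-k)!}$, which matches the new remainder term at level $m+1$. Combined with the preserved sum $\sum_{l=k}^{m}$ from the induction hypothesis and the new $l=m+1$ contribution, this closes the induction.

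The argument is essentially bookkeeping; the only point to be careful about is the binomial identity $\frac{m!}{(m-k)!\,(m+1-k)}=\frac{m!}{(m+1-k)!}$ together with $(m+1)\cdot m!=(m+1)!$, and the application of Fubini, which is valid provided $X_{m+2}\geq 0$ (guaranteed here since the $X_j$ are nonnegative by construction in the proof of Proposition~\ref{propLp}). No additional structure of the sequence is used beyond \eqref{basicinduction} itself.
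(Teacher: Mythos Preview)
Your proof is correct and follows essentially the same approach as the paper: induction on $m\geq k$ with base case $m=k$ reproducing~\eqref{basicinduction}, and the inductive step obtained by inserting~\eqref{basicinduction} for $X_{m+1}$ into the remainder, integrating $(t-s)^{m-k}$ for the new summand and applying Fubini for the new remainder. Your additional remarks on the factorial bookkeeping and the justification of Fubini via nonnegativity of the $X_j$ are accurate and make the argument slightly more explicit than the paper's version.
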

Assuming that Lemma~\ref{lemsimple} holds, we use \eqref{induction} up to $m+1=N$ to derive through the assumptions on $f_N$ that
\[
\begin{split}
X_k(t)\leq &\sum_{l=k}^{N-1} F_0^l\,L^{l-k}\,t^{l-k}\,\frac{(l-1)!}{(k-1)!\,(l-k)!}\\
& \hskip1cm +L^{N-k}\,\int_0^t F^N\,(t-s)^{N-1-k}\,\frac{(N-1)!}{(k-1)!\,(N-1-k)!}\,ds,
\end{split}
\]
that is
\begin{equation}
\begin{split}
&X_k(t)\leq \sum_{l=k}^{N-1} F_0^l\,L^{l-k}\,t^{l-k}\,\frac{(l-1)!}{(k-1)!\,(l-k)!} \\
&  \hskip1.5cm +F^N\,L^{N-k}\,t^{N-k}\,\frac{(N-1)!}{(k-1)!\,(N-k)!}.\label{finalinduction}
\end{split} 
\end{equation}
Note that
\[
\frac{(l-1)!}{(k-1)!\,(l-k)!}=\binom{l-1}{k-1}\leq 2^{l-1}.
\]
Hence, \eqref{finalinduction} implies that
\[
\begin{split}
  X_k(t)&\leq \sum_{l=k}^{N} F_0^l\,L^{l-k}\,t^{l-k}\,2^{l-1}+F^N\,L^{N-k}\,t^{N-k}\,2^{N-1}\\
  &=2^{k-1}\, F^k_0\,\sum_{l=k}^{N-1} F_0^{l-k}\,2^{l-k}\,L^{l-k}\,t^{l-k}
      +F^k 2^{k-1}  F^{N-k} \,L^{N-k}\,t^{N-k}\,2^{N-k}\\
  &\leq  2^{k-1}\, F_0^k (2- 2^{k-N+1}) + F^k 2^{k-1} 2^{k-N}\\
  & \leq F_0^k 2^k  + F^k 2^{2k-N-1}
\end{split}
\]
provided that $4\,L\,t\,\max(F_0,F)<1$, which concludes the proof of the proposition. \end{proof}

We finish with the quick proof of  Lemma~\ref{lemsimple}.
\begin{proof}[Proof of Lemma~\ref{lemsimple}]
Taking $m=k$ in \eqref{induction}, we get
\[
X_k(t)\leq  F_0^k+L\,\int_0^t X_{k+1}(s)\,\frac{k!}{(k-1)!\,(k-k)!}\,ds,
\]
which is our starting point. Moreover assuming that~\eqref{induction} holds for $m$, we may use~\eqref{basicinduction} to find
\[
\begin{split}
&X_k(t)\leq \sum_{l=k}^m F_0^l\,L^{l-k}\,t^{l-k}\,\frac{(l-1)!}{(k-1)!\,(l-k)!}\\
  &+L^{m+1-k}\,\int_0^t \left(F_0^{m+1}+L\,(m+1)\,\int_0^s X_{m+2}(s)\,ds\right)\\
  &\qquad\qquad\qquad\qquad\qquad(t-s)^{m-k}\,\frac{m!}{(k-1)!\,(m-k)!}\,ds.
\end{split}
\]
This yields
\[
\begin{split}
  X_k(t)\leq  &\sum_{l=k}^m F_0^l\,L^{l-k}\,t^{l-k}\,\frac{(l-1)!}{(k-1)!\,(l-k)!}\\
  &+L^{m+1-k}\,F_0^{m+1}\,\frac{m!}{(k-1)!\,(m-k)!}\,\int_0^t (t-s)^{m-k}\,ds\\
&+L^{m+2-k} \int_0^t X_{m+2}(r) \int_r^t (t-s)^{m-k}\,ds\,dr \frac{(m+1)!}{(k-1)!\,(m-k)!}\,ds,
\end{split}
\]
or
\[
\begin{split}
  X_k(t)&\leq \sum_{l=k}^m F_0^l\,L^{l-k}\,t^{l-k}\,\frac{(l-1)!}{(k-1)!\,(l-k)!}\\
  &+L^{m+1-k}\,F_0^{m+1}\,\frac{m!}{(k-1)!\,(m+1-k)!}\, t^{m+1-k}\\
&+L^{m+2-k}\int_0^t X_{m+2}(r) (t-r)^{m+1-k}\,ds\,dr \frac{(m+1)!}{(k-1)!\,(m+1-k)!}\,ds,
\end{split}
\]
as claimed.
\end{proof}

\subsection{Proof of Theorem \ref{Conv}}
The proof of Theorem~\ref{Conv} follows closely the steps in the proof of Proposition~\ref{propLp}, once appropriate bounds have been derived. 
\medskip

\noindent 1){ \em Uniform bounds on $f_N$ in $L^q_{e_N}$}. First of all, note that from the assumptions of Theorem~\ref{Conv}, we can easily obtain a bound on $f_N^0$ in $L^q_{\lambda^0\,e_N}$ for $\Lambda$ large enough. Indeed
\[
\begin{split}
  &\int_{\Pi^{dN}\times \R^{dN}} |f_N^0|^q\,e^{\lambda^0\,e_N}\\
  &\qquad=e^N\,\int_{\Pi^{dN}\times \R^{dN}} |f_N^0|^q\,e^{2\,\lambda^0\,\sum_{i\leq N} |v_i|^2}\,e^{\frac{\lambda^0}{N}\,\sum_{i,j\leq N} \phi(x_i-x_j)-\lambda^0\,\sum_{i\leq N} |v_i|^2}.
\end{split}
\]
We have straightforward $L^r$ estimates on $e^{\frac{\lambda^0}{N}\,\sum_{i,j\leq N} \phi(x_i-x_j)-\lambda^0\,\sum_{i\leq N} |v_i|^2}$ as by H\"older inequality,
\[
\begin{split}
  \int_{\Pi^{dN}\times \R^{dN}} & e^{\frac{r\,\lambda^0}{N}\,\sum_{i,j\leq N} \phi(x_i-x_j)-r\,\lambda^0\,\sum_{i\leq N} |v_i|^2}\\
  &=\frac{C^N}{\lambda_0^{N/2}}\,\int_{\Pi^{dN}} e^{\frac{r\,\lambda^0}{N}\,\sum_{i,j\leq N} \phi(x_i-x_j)}\\
  & \leq \frac{C^N}{\lambda_0^{N/2}}\,\left(\Pi_{i\leq N} \int_{\Pi^{dN}} e^{r\,\lambda^0\sum_{j\leq N} \phi(x_i-x_j)}\right)^{1/N}\leq \frac{C^N}{\lambda_0^{N/2}},
\end{split}
\]
from some constant $C$ and by assumption~\eqref{expphi} in Theorem~\ref{Conv}, provided that $r\,\lambda^0\leq 1/\theta$. This implies, again by H\"older inequality
\[
\begin{split}
  \int_{\Pi^{dN}\times \R^{dN}} & |f_N^0|^q\,e^{\lambda^0\,e_N}\leq \frac{C^N}{\lambda_0^{N/2}}\,\int_{\Pi^{dN}\times \R^{dN}} |f_N^0|^{r^*\,q}\,e^{2\,r^*\,\lambda^0\,\sum_{i\leq N} |v_i|^2}\\
  &\leq \frac{C^N}{\lambda_0^{N/2}}\,\|f_N^0\|_{L^\infty}^{q\,r^*-1}\,\int_{\Pi^{dN}\times \R^{dN}} |f_N^0|^{r^*\,q}\,e^{2\,r^*\,\lambda^0\,\sum_{i\leq N} |v_i|^2}.
\end{split}
\]
Using now assumption~\eqref{gaussiandecay}, provided that $2\,r^*\,\lambda_0\leq\beta$, we conclude that 
\begin{equation}
\int_{\Pi^{dN}\times \R^{dN}} |f_N^0|^q\,e^{\lambda^0\,e_N}\leq \left(\frac{C\,V\,M}{\lambda_0}\right)^N,\label{boundfN0}
\end{equation}
for any $q<\infty$. We now choose any fixed $2<q<\infty$ such that $1/p+1/q<1$ and we remark that the Liouville Eq.~\eqref{Liouville} is included in Eq~\eqref{hierarchyext} for $\gamma=1$, $R_{k,N}=0$, and $k=N$. Thus, we next invoke Lemma~\ref{technicalLemma} for $f_N$ with $k=N$ and $\gamma=1$, to find that $f_N$ solves
\[
\frac{d}{dt} \int_{\Pi^{dN}\times \R^{dN}} |f_N(t,.,.)|^q\,e^{\lambda(t)\,e_N}\leq 0,
\]
so that from~\eqref{boundfN0}, we obtain that
\[
\sup_{t\leq 1}\int_{\Pi^{dN}\times \R^{dN}} |f_N(t,.,.)|^q\,e^{\lambda(t)\,e_N}\leq \left(\frac{C\,V\,M}{\lambda_0}\right)^N.
\]
This finally implies that there exists some constant $F>0$ such that
\begin{equation}\label{fNest}
\sup_{t\leq 1}\int_{\Pi^{dN}\times \R^{dN}} |f_N(t,.,.)|^q\,e^{\lambda(t)\,e_N}\leq F^N.
  \end{equation}
 
 \medskip

 \noindent 2) {\em Uniform estimates on the marginals and passing the limit in the hierarchy~\eqref{hierarchy}.} First of all we can perform the same bounds on each $f_{k,N}^0$ to find similarly to~\eqref{boundfN0} that
 \[
 \int_{\Pi^{kd}\times \R^{kd}} |f_{k,N}^0|^q\,e^{\lambda^0\,e_k}\leq \left(\frac{C\,V\,M}{\lambda_0}\right)^k.
 \]
 As a consequence,  every assumptions of Proposition~\ref{propLp} hold, and, in particular, assumption~\eqref{assumptlq}. This implies that for some time $T^*>0$, depending only on $V$, $M$, $\|K\|_{L^p}$ and the choice of $q$, we have that 
 \[
\sup_N\sup_{t\leq T^*}\int_{\Pi^{kd}\times \R^{kd}} |f_{k,N}|^q\,e^{\lambda(t)\,e_k}\leq \bar M^k,
 \]
 for some constant $\bar M$. At this point, we will not need anymore the potential in the reduced energy $e_k$, which was required to handle the $L_k$ operator that vanishes at the limit. For this reason, and since $\phi\geq 0$, we deduce from the previous inequality
 \begin{equation}
\sup_N\sup_{t\leq T^*}\int_{\Pi^{kd}\times \R^{kd}} |f_{k,N}|^q\,e^{\lambda(T^*)\,\sum_{i\leq k} |v_i|^2}\leq \bar M^k.\label{estfkN}
\end{equation}
These uniform bounds let us extract a converging subsequence such that all $f_{k,N}$ converge weak-$\star$ to some $\bar f_k$ in $L^\infty([0,\ T^*],\;L^q_{x,v})$ that also satisfies 
 \begin{equation}
\sup_{t\leq T^*}\int_{\Pi^{kd}\times \R^{kd}} |\bar f_{k}|^q\,e^{\lambda(T^*)\,\sum_{i\leq k} |v_i|^2}\leq \bar M^k,\label{estbarfk}
\end{equation}
where we have used classical convex estimates. We emphasize that for the moment we only have convergence of a subsequence, though we still denote it by $N$ for simplicity. We eventually obtain the convergence of the whole sequence only after the uniqueness of the limit is proved in the next step.

 From estimate~\eqref{estfkN}, and since $1/q+1/p\leq 1$, we may simply bound
 \[
\left\|\sum_{i\leq k}\frac{1}{N}\sum_{j\leq k} K(x_i-x_j)\cdot\nabla_{v_i} f_{k,N}\right\|_{L^\infty_t L^1_{x,v,loc}}\lesssim \frac{k^2}{N}\,\|K\|_{L^p}\,\|f_{k,N}\|_{L^\infty_t L^q_{x,v}}.
 \]
 For any fixed $k$, the corresponding term vanishes as $N\to\infty$. Similarly estimate~\eqref{estfkN} allows to pass to the limit
 \[
 \begin{split}
   &\int_{\Pi^d\times\R^d} K(x_i-x_{k+1})\,f_{k+1,N}\,dx_{k+1}\,dv_{k+1}\\
   &\qquad\longrightarrow \int_{\Pi^d\times\R^d} K(x_i-x_{k+1})\,\bar f_{k+1}\,dx_{k+1}\,dv_{k+1},
\end{split}
   \]
   for the weak-$\star$ topology of $L^\infty([0,\ T^*],\;L^q_{x,v})$. It is straightforward to pass to the limit in the sense of distributions in all other terms of the hierarchy~\eqref{hierarchy} so that we deduce that $\bar f_k$ is a solution to the limiting hierarchy~\eqref{vlasovhierarchy} in the sense of distributions.

   We can also easily identify the initial value of $\bar f_k$. From~\eqref{hierarchy} and the bounds derived from~\eqref{estfkN}, we immediately obtain a uniform bound on $\partial_t f_{k,N}$ in $L^\infty_t W^{-1,q}_{x,v,loc}$. By the assumption of Theorem~\ref{Conv}, $f_{k,N}^0$ converges weakly to $(f^0)^{\otimes k}$ so that we have $\bar f_k(t=0)=(f^0)^{\otimes k}$.
 
\medskip 
 
\noindent  3) {\em Uniqueness on the limiting hierarchy and conclusion}. We first argue that $\bar f_k$ is automatically a renormalized solution to~\eqref{vlasovhierarchy}. Indeed Eq.~\eqref{vlasovhierarchy} can be seen as a linear advection-diffusion equation with a locally Lipschitz velocity field $(v_1,\ldots,v_k)$ and a remainder
\[
\nabla_{v_i}\cdot \int_{\Pi^d\times\R^d} K(x_i-x_j)\,\bar f_{k+1}\,dx_{k+1}\,dv_{k+1},
\]
that belongs to $L^\infty_t L^q_{x,v}$ with $q>2$ per our prior estimates.

Next we note that since $f$ is a classical solution to  the vlasov equation \eqref{vlasov}, the $f^{\otimes k}$  also yield renormalized solutions to the    Vlasov hierarchy \eqref{vlasovhierarchy} for every $k\geq 1$.  Due to the linearity in terms of the sequence $\{f_k\}_{k \in {\mathbb N}^\star}$ of the Vlasov hierarchy, we get that each $F_k= \bar f_k-f^{\otimes k}$ is also a renormalized solution to the Vlasov Hierarchy~\eqref{vlasovhierarchy} for every $k$. Moreover by the identification above of the initial with zero initial data.

Furthermore, by~\eqref{estbarfk} and the assumption of Gaussian decay on~$f^0$, we have that 
\begin{equation}
\sup_{t\leq T^*}\int_{\Pi^{kd}\times\R^{kd}} |F_k|^q\,e^{\tilde\beta\,\sum_{i\leq k} (1+|v_i|)^2}\leq \tilde M^k,
\label{tildeM}
\end{equation}
for some~$\tilde\beta$ and some $\tilde M$. Eq.~\eqref{vlasovhierarchy} corresponds to Eq.~\eqref{hierarchyext} in the case $\gamma=0$, where $e_{k,\gamma}$ reduces to $e_{k,0}=\sum_{i\leq k} (1+|v_i|)^2$.
Hence, provided we choose some $\tilde \Lambda$, possibly lower than $\Lambda$ we satisfy all assumptions from Lemma~\ref{technicalLemma}.

Denoting by $Y_k = \int |F_k|^qe^{\tilde \lambda(t) e_{k,0}}$, we get for all $k\in {\mathbb N}^\star$
\[
Y_k(t) \le k\,\tilde L\, \int_0^t Y_{k+1} \, ds.
\]
We can then use Lemma \ref{indu} with $F_0=0$ up to any arbitrary $m$ to show, together with~\eqref{tildeM}, that
\begin{equation} \label{epsilon}
\begin{split}
  Y_k(t) &\le \tilde L^{m+1-k}\,\tilde M^{m+1}\,\int_0^t (t-s)^{m-k}\,\frac{m!}{(k-1)!\,(m-k)!}\,ds\\
  &\leq \tilde L^{m+1-k}\,\tilde M^{m+1}\,t^{m+1-k}\,\binom{m}{k-1}\leq 2^k\,\tilde M^k\,(2\,\tilde L\,\tilde M\,t)^{m+1-k}.
\end{split}
\end{equation}
By taking $t<T_0$ with $T_0$ small enough and sending $m$ to $\infty$, we obtain that $Y_k(t)=0$ and hence $\bar f_k=f^{\otimes k}$ on $[0,\ T_0]$. This allows to repeat the argument starting from~$t=T_0$ instead of $t=0$ until we reach the maximum time~$T^*$. This finally allows to conclude as claimed that $\bar f_k=f^{\otimes k}$ over the whole interval $[0,\ T^*]$. 

Coming back to our extracted subsequence on $f_{k,N}$, since all such subsequences have the same limit, we have convergence of the whole sequence to the $f^{\otimes k}$ concluding the proof.

\subsection{Proof of Theorem \ref{quantitative}}
The aim of this result is to provide a quantitative estimate between $f_{k,N}$ and $f_k$ that satisfy \eqref{hierarchy} and \eqref{vlasovhierarchy}, respectively, for the tensorized limits $f_k= 
{ f}^{\otimes k}$.   First let us note that $F_k^N= f_{k,N} -f_k$ satisfies,
\begin{equation}
\begin{split}
\partial_t F_k^N 
& + L_k F_k^N  \\
& + \frac{N-k}{N} \sum_{i=1}^k \nabla_{v_i} \cdot \int_{\Pi^d \times \R^d} F_{k+1}^N K(x_i - x_{k+1}) \, d x_{k+1} d v_{k+1}\\
&  = 
  \frac{\sigma^2}{2} \sum_{i=1}^k \Delta_{v_i} F_{k,N} + R_{k,N}
  \end{split} 
\end{equation}
where
$L_k $ is defined in \eqref{L}, 
and 
\begin{equation}
\begin{split}
R_{k,N}  =  &
 \sum_{i=1}^k \left[\Big(K\star \int_{\R^d} f \Big)(t,x_i) - \frac{1}{N} \sum_{j=1}^kK(x_i-x_j)\right] \cdot \nabla_{v_i} f_k. \\
& - \frac{N-k}{N} \sum_{i=1}^k \nabla_{v_i} \cdot \int_{\Pi^d\times \R^d} f_{k+1} K(x_i-x_{k+1}) d x_{k+1} d v_{k+1}.
\end{split}
\end{equation}
We again use Lemma~\ref{technicalLemma} with $q=2$ to deduce
\begin{equation} \label{FkN}
\begin{split}
  \frac{d}{dt} \int_{\Pi^{kd}\times \R^{kd}}& |F_{k,N}|^2 e^{\lambda(t) e_{k,\gamma}}
      +  \frac{\sigma^2}{4}\sum_{i\le k}  \int_{\Pi^{kd}\times \R^{kd}} |\nabla_{v_i} F_{k,N}|^2 e^{\lambda(t) e_{k}} \\
 &   \le k \frac{N- k}{N} \frac{C_{2,\sigma,d}}{\lambda^{\theta_{2,d}}(t)} \|K\|_{L^2}^2
      \int_{\Pi^{kd}\times \R^{kd}} |F_{k+1,N}|^2 e^{\lambda(t) e_{k+1}}\\
 &\quad +  \lambda'(t) \int_{\Pi^{kd}\times \R^{kd}}  e_{k} \, |F_{k,N}|^2 \, e^{\lambda(t) e_{k}}
       \\
 &\quad + \int_{\Pi^{kd}\times \R^{kd}} R_{k,N} F_{k,N}  e^{\lambda(t) e_{k}}. 
\end{split}
\end{equation}
Note that $R_{k,N}$ may be written as follows 
\begin{equation}
\begin{split}
R_{k,N}  =  &
 \sum_{i=1}^k \frac{1}{N} \sum_{j=1}^k \Bigg[(K\star \int_{\R^d} f)(t,x_i) - K(x_i-x_j)]\Bigg] \cdot \nabla_{v_i} f_k\\
& - \frac{N-k}{N} \sum_{i=1}^k 
\Bigg[ \nabla_{v_i} \cdot \int_{\Pi^d\times \R^d} f_{k+1} K(x_i-x_{k+1}) d x_{k+1} d v_{k+1}
         \\
&  \hspace{2.6cm} - \left(K \star \int_{\R^d} \overline f \right)(t,x_i)\cdot \nabla_{v_i} f_k\Bigg]. 
         \end{split}
\end{equation}
Then, using that $f_k= f^{\otimes k}$, we have
\[
\begin{split}
 \int_{\Pi^{kd}\times \R^{kd}} R_{k,N} F_{k,N}  e^{\lambda(t) e_{k}} 
  = 
\int_{\Pi^{kd}\times \R^{kd}} & \frac k N \sum_{i=1}^k  \Bigg[ \Big( K\star  \int_{\R^d} f  \Big)(t,x_i) \\&- K(x_i-x_1)\Bigg] \cdot \nabla_{v_i}  f_k \, F_{k,N}  e^{\lambda(t) e_{k}} ,
\end{split}
\]
where we have used the fact that the particles are interchangeable. Integrating by parts with respect to $v_i$ and using Young inequality, we obtain
\begin{equation} \label{Rest}
\begin{split}
 \int_{\Pi^{kd}\times \R^{kd}} R_{k,N} F_{k,N} e^{\lambda(t) e_{k}} \le &
     \frac{\sigma^2}{4} \frac k N \sum_{i=1}^k \int_{\Pi^{kd}\times \R^{kd}} |\nabla_{v_i} F_{k,N}|^2  e^{\lambda(t) e_{k}} 
   \\ &+\frac{1}{\sigma^2 }  \frac k N \sum_{i=1}^k \int_{\Pi^{kd}\times \R^{kd}} |\widetilde R_{k,N}^1|^2 e^{\lambda(t) e_{k}} 
\\
& +  \lambda(t)  \int_{\Pi^{kd}\times \R^{kd}} e_{k} \, |F_{k,N}|^2  \, e^{\lambda(t) e_{k}}  
\\ & +\frac{1}{2} \int_{\Pi^{kd}\times \R^{kd}} |\widetilde R_{k,N}^2|^2 \, e^{\lambda(t) e_{k}},
\end{split}
\end{equation}
where
\[
\begin{split}
\widetilde R_{k,N}^1 
   &= \Bigg[ \Big(K\star \int_{\R^d} f\, dx \Big)(t,x_i) - K(x_i-x_1)\Bigg] \,    f_k \,, \\
\widetilde R_{k,N}^2 
   &=  \sum_{i=1}^k \Bigg[ \Big(K\star \int_{\R^d} f\, dx \Big)(t,x_i) - K(x_i-x_1)\Bigg] \,    f_k \,.
\end{split}
\]
 We observe that 
\[\|\widetilde R_{k,N}^i\|^2_{L^2_{\lambda(t) e_k}}
   \le 
   C \, k \int_{\Pi^{kd}\times \R^{kd}} |f_{k}|^p e^{\lambda(t) e_{k}} \, ,
   \]
   with a constant $C$ that does not depend on $k$. We have also used the fact that, in particular, $K \in L^2(\Pi^d)$ and $ f \in L^\infty(\Pi^d\times \R^d)$.

   Then, using \eqref{Estim} and letting $N\to +\infty$, we get
$$
\sup_{t\le T^*} \int_{\Pi^{kd}\times \R^{kd}}|f_k|^p e^{\lambda(t)e_{k, \gamma}} \le 2^k F_0^k .
$$
We can insert this estimate into \eqref{Rest}, for $p= 2$, to derive
\[
\begin{split}
 \int_{\Pi^{kd}\times \R^{kd}} R_{k,N} F_{k,N} e^{\lambda(t) e_{k}} \le &
     \frac{\sigma^2}{4} \frac k N \sum_{i=1}^k \int_{\Pi^{kd}\times \R^{kd}} |\nabla_{v_i} F_{k,N}|^2  e^{\lambda(t) e_{k}} 
\\
& + \lambda(t)  \int_{\Pi^{kd}\times \R^{kd}}  e_{k} \, |F_{k,N}|^2  \, e^{\lambda(t) e_{k}}  
\\ & + C\, k \, 2^k F_0^k .
\end{split}
\]
Once this estimate is incorporated into \eqref{FkN} and using that $\lambda'(t) = - \frac{\lambda(t)}{1+t}$, we can, following the same lines of the proof of Proposition \ref{propLp}, repeat the estimate on the ODE inequality with the extra term coming from the interaction of $F_{k,N}$ with rest term $R_{k,N}$. This provides the conclusion that there exists $T^*$ such that
\[
\begin{split}
\sup_{t \le T^\star} \int_{\Pi^{kd}\times \R^{kd}} |f_{N,k} - f_k|^2 & e^{\lambda(t) e_{k, \gamma}} 
\\ & \le \widetilde C^k \varepsilon_N + \widetilde C^k \int_{\Pi^{kd}\times \R^{kd}} |f_{N,k}^0 - f_k^0|^2 e^{\lambda(0) e_{k, \gamma}}    ,
\end{split}
\]
where $\widetilde C$ is a positive constant that does not depend on $N$, and $\varepsilon_N = O(\varepsilon^N)$, where $\varepsilon < 1$ depends on $T^*$ that is small enough. This expression can be deduced  in a similar way as \eqref{epsilon} in the proof of Theorem \ref{Conv}.
  We  finally emphasize that the quantitative bounds of Theorem \ref{quantitative} would allow to recover the optimal convergence rate in $O(1/N)$ that was recently obtained in~\cite{Lacker}. 

\bigskip

\noindent {\bf Acknowledgments.} D. Bresch is partially supported by SingFlows project, grant ANR-18-CE40-0027. P.~E. Jabin is partially supported by NSF DMS Grant 161453, 1908739, 2049020. J. Soler is partially supported by the State Research Agency (SRA) of the Spanish Ministry of Science and Innovation and European Regional Development Fund (ERDF), project PID2022-137228OB-I00, and by Modeling Nature Research Unit, project QUAL21-011.

\end{document}